\documentclass[12pt]{amsart}
\textwidth=14.5cm \oddsidemargin=0cm
\evensidemargin=0cm
\usepackage[utf8]{inputenc}
\usepackage{amsmath}
\usepackage{amsfonts}
\usepackage{amssymb}
\usepackage{amsthm}
\usepackage{IEEEtrantools}
 \usepackage{bbold}
\usepackage{mathrsfs}

\newcommand{\Nn}{\mathbb{N}}

\newcommand{\Rr}{\mathbb{R}}

\newcommand{\tp}{\mathrm{tp}}

\newcommand{\bdd}{\mathrm{bdd}}

\newcommand{\e}{\epsilon}

\def\Ind#1#2{#1\setbox0=\hbox{$#1x$}\kern\wd0\hbox to 0pt{\hss$#1\mid$\hss}
\lower.9\ht0\hbox to 0pt{\hss$#1\smile$\hss}\kern\wd0}

\def\notind#1#2{#1\setbox0=\hbox{$#1x$}\kern\wd0
\hbox to 0pt{\mathchardef\nn=12854\hss$#1\nn$\kern1.4\wd0\hss}
\hbox to 0pt{\hss$#1\mid$\hss}\lower.9\ht0 \hbox to 0pt{\hss$#1\smile$\hss}\kern\wd0}

\begin{document}

\newtheorem{lemma}{Lemma}[section]
\newtheorem{theorem}[lemma]{Theorem}
\newtheorem{definition}[lemma]{Definition}
\newtheorem{example}[lemma]{Example}
\newtheorem{corollary}[lemma]{Corollary}
\newtheorem{proposition}[lemma]{Proposition}
\newtheorem{claim}{Claim}[lemma]

\title{Model Theory of Scattered Piecewise Interpretable Hilbert Spaces}

\author{Alexis Chevalier}

\begin{abstract}
We study scattered piecewise interpretable Hilbert spaces from a model theoretic point of view. We establish strong connections between the Hilbert space structure theorems of \cite{ChevalierHru2021} and the model theoretic notions of canonical bases, one-basedness and model theoretic ranks.
\end{abstract}

\maketitle

Let $T$ be a complete continuous logic theory with a piecewise interpretable Hilbert space $\mathcal{H}$ as defined in \cite{ChevalierHru2021}. The main theorem of \cite{ChevalierHru2021}
  shows that a piecewise $\bigwedge$-interpretable subspace of $\mathcal{H}$ generated by a \emph{scattered} type-definable set is the orthogonal sum of piecewise $\bigwedge$-interpretable subspaces generated by \emph{asymptotically free} complete types. 
  
  Scattered piecewise interpretable Hilbert spaces were shown to capture a wide variety of  situations of interest in model theory and beyond, including $L^2$-spaces associated to absolute Galois groups or definable measures, unitary representations of $\omega$-categorical classical logic structures, and unitary representations of algebraic groups over local fields of characteristic $0$. In this paper, we show that these structure theorems and the surrounding notions are very natural from the  point of view of model theory.
  
  We establish a rough dictionary  between Hilbert space  notions and model theoretic notions. We connect \emph{weak closure} in Hilbert spaces and \emph{canonical bases} for Hilbert space types; \emph{scattered type-definable sets} and a local continuous logic version of \emph{U-rank};  \emph{commuting orthogonal projections} and \emph{one-basedness}. These connections are strong but they are not exact correspondences. 
 
% As a result of the discussion in this paper, we have the following reformulation of the main theorem of \cite{ChevalierHru2021}:
%  
%  
%   
%  \begin{changemargin}{0.7cm}{0.7cm}
%%\emph{Let $p(x)$ be a scattered type-definable set in $\mathcal{H}$. The type-definable set of canonical parameters of $\langle x, y\rangle$-types consistent with $p$ is coordinatized by an orthogonal family of strongly minimal types $(p_\alpha)$, in the sense that for every $a \models p$, there is a countable set $(a_n)$ in the various types $p_\alpha$ such that $a$ is in the bounded closure of $(a_n)$.}
%
%\emph{Suppose $\mathcal{H}$ is generated by a scattered piece.
% Then there is a reduct $T_0$ of $T$ such that $\mathcal{H}$ is piecewise interpretable in $T_0$ and $T_0$ contains strongly minimal complete types $(p_i)_{i \in I}$ such that 
%the piecewise $\bigwedge$-interpretable subspace $\mathcal{H}_p$ is contained in $\bdd(\bigcup_{i \in I} \mathcal{H}_{p_i})$.}
%\end{changemargin}

In Section \ref{section: main definitions}, we  recall the main definitions from \cite{ChevalierHru2021}. In Section \ref{section: weak closure and canonical bases}, we  show that the partial order  on the weak closure of a type-definable set $p$ has a natural model theoretic interpretation in terms of forking relations between types. In Section \ref{section: ranks}, we study connections between scatteredness and the rank on the weak closure of a type-definable set. We show that the application of Von Neumann's lemma in \cite{ChevalierHru2021} is equivalent to showing one-basedness with respect to Hilbert space forking independence. We end by highlighting some open questions    and giving some examples of interpretable Hilbert spaces with surprising properties.
 
\section{Main definitions and notation}\label{section: main definitions}

For convenience, we recall some key notions from \cite{ChevalierHru2021}.   We will use the same notation. In this paper we always use continuous logic, as presented in \cite{Bekka2008} or in the appendix of \cite{ChevalierHru2021}.

Recall that for $M \models T$, a Hilbert space $H(M)$ is said to be piecewise interpretable in $M$ if $H(M)$ is a direct limit of imaginary sorts of $M$.  Since all data is assumed to be definable, any piecewise interpretable $H(M)$ in $M$ gives rise to a functor $\mathcal{H}$ which takes models of $T$ to Hilbert spaces such that for every $N \models T$, $H(N)$ is piecewise interpretable in $N$.

 We fix $\mathcal{H}$ piecewise interpretable in $T$.  
In this paper, we will move freely to imaginary sorts of $T$, so we can assume that the pieces of $\mathcal{H}$ belong to $T$ and that the direct limit maps on each piece of $\mathcal{H}$ are isometries. Therefore, $\mathcal{H}$ is in fact \emph{piecewise definable.}

For any $M \models T$, we will   identify vectors of $H(M)$ with elements of $M$. When $M \models T$ and $A \subseteq M$, we write $P_A$ for the orthogonal projection  onto the closed subspace of $H(M)$ densely spanned by $A\cap H(M)$.

Recall that a \emph{type-definable set in $\mathcal{H}$} is a type-definable set \emph{contained in a piece of $\mathcal{H}$}. If $p$ is type-definable in $\mathcal{H}$, we write $\mathcal{H}_p$ for the piecewise $\bigwedge$-interpretable subspace of $\mathcal{H}$ generated by $p$.

We recall some important notation from \cite{ChevalierHru2021}.

\begin{definition}
For $p$ a type-definable set in $\mathcal{H}$ and $M \models T$, we write $\mathcal{P}(p)$ for the weak closure of the set   of realisations of $p$ in $H(M)$. When $M \models T$ is $\omega_1$-saturated, we define the partial order $\leq_\mathcal{P}$ on $\mathcal{P}(p)$ as follows: for $u, v \in \mathcal{P}(p)$, $v \leq u$ if and only if there are $\bdd$-closed subsests $A_1, \ldots, A_n$ of $M$ such that $v = P_{A_1}\ldots P_{A_n} u$.
\end{definition}

 \cite{ChevalierHru2021} shows that $\mathcal{P}(p)$ is a type-definable set in a piece of $\mathcal{H}$.  
 Recall also from \cite{ChevalierHru2021} that when $M \models T$ is $\omega_1$-saturated, $\mathcal{P}(p)$ is the set of weak limit points of indiscernible sequences in $p$,  that $\mathcal{P}(p)= \{P_{\bdd(A)} a \mid a \models p, A \subseteq M\}$ and that $\mathcal{P}(p)$ is closed under the projections $P_{\bdd(A)}$ for $A \subseteq M$. 
 
 \begin{definition}
 We say that a type-definable set $p$ in $\mathcal{H}$ is scattered if for some (any) $M \models T$ $\omega_1$-saturated, $\mathcal{P}(p)$ is locally compact in $M$.
 
 We say that a type-definable set $p$ in $\mathcal{H}$ is asymptotically free if for any $M \models T$ and $a \models p$ in $M$, for any $\e > 0$, the type-definable set $p(x) \cup |\langle x, a\rangle| \geq \e$ is contained in $\bdd(a)$. Equivalently, for $M \models T$ $\omega_1$-saturated and $a,b \models p$ in $M$, we have either $\langle a, b\rangle = 0$ or $a \in \bdd(b)$. 
 \end{definition}
 
In the rest of this paper, we will work with $M \models T$, a type-definable set $p(x)$ in $\mathcal{H}$   and \emph{$\langle x,y \rangle$-types over $M$ consistent with $p$}. These are partial types $q$ of the form $ \{\langle x, b\rangle = \lambda(b) \mid b \in H(M)\}$ where $\lambda$ is a function $H(M) \to \Rr$. Note that this is technically a partial type in an infinite fragment of the language, as $y$ ranges over infinitely many sorts. In the next lemma, we show that we can always restrict  $y$ to range over  the same sort as $x$.

\begin{lemma}
Let $\mathcal{H}$ be piecewise interpretable in $T$ and let $M \prec N\models T$. Let $D$ be a piece of $\mathcal{H}$ and write $\mathcal{H}_D$ for the piecewise interpretable Hilbert space generated by $D$, so that $\mathcal{H}_D \leq \mathcal{H}$. Then $H_D(N)$ is orthogonal to $H(M)$ over $H_D(M)$. 
\end{lemma}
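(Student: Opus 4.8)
The plan is to unwind the Hilbert space statement into a claim about orthogonal projections and then reduce it to a single vector. Recall that $H_D(N)$ being orthogonal to $H(M)$ over $H_D(M)$ means precisely that the orthogonal projection of $H(N)$ onto the closed subspace $\overline{H(M)}$ sends $H_D(N)$ into $H_D(M)$; writing $Q := P_{H(M)}$ for this projection, we must show $Q\big(H_D(N)\big) \subseteq H_D(M)$. Since $H_D(N)$ is the closed linear span of the realisations of $D$ in $N$ and both $Q$ and the inner product are continuous, by linearity and density it suffices to prove $Qa \in H_D(M)$ for a single $a \models D$ in $N$.

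For this I would exploit the \emph{definable} orthogonal projection $\pi := P_{\mathcal{H}_D} \colon \mathcal{H} \to \mathcal{H}_D$ onto the subspace generated by the piece $D$, which is available over $\emptyset$ because $\mathcal{H}_D \le \mathcal{H}$ are piecewise definable in the sense of \cite{ChevalierHru2021}. Two features of $\pi$ drive the argument. First, $\pi$ is a $\emptyset$-definable function, so it commutes with the elementary embedding $M \prec N$: for $m \in H(M)$ we have $\pi(m) \in \dcl(m) \cap \mathcal{H}_D \subseteq H_D(M)$, and hence $\pi$ preserves $H(M)$ and, by continuity, its closure. Second, $\pi$ is a self-adjoint idempotent, so the invariance $\pi\big(H(M)\big) \subseteq H(M)$ forces invariance of the orthogonal complement: if $v \perp H(M)$ and $m \in H(M)$ then $\langle \pi v, m\rangle = \langle v, \pi m\rangle = 0$, so $\pi v \perp H(M)$. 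Consequently $\pi$ commutes with $Q$. Now for $a \models D$ in $N$ we have $\pi a = a$ since $a \in \mathcal{H}_D$, and therefore $Qa = Q\pi a = \pi Q a \in \pi\big(H(M)\big) \subseteq H_D(M)$. Extending back along the density reduction of the first paragraph gives $Q\big(H_D(N)\big) \subseteq H_D(M)$, as required.

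The step I expect to be the main obstacle is the justification that $\pi = P_{\mathcal{H}_D}$ is genuinely $\emptyset$-definable, or at least that $H(M)$ is $\pi$-invariant; this is a stable-embeddedness phenomenon for $\mathcal{H}_D$ rather than a formal consequence of $M \prec N$. In the framework of \cite{ChevalierHru2021} the projections onto the interpretable subspaces generated by pieces are part of the available definable structure (the projections $P_{\bdd(A)}$ are used throughout), so I would invoke that directly. Should one wish to avoid appealing to commutation, the same conclusion follows by hand: setting $b := Qa$ and $a^{\perp} := a - b \perp H(M)$, applying $\pi$ to $a = b + a^{\perp}$ and using $\pi a = a$ together with $b - \pi b \in \overline{H(M)} \perp a^{\perp}$ yields $\langle a^\perp, \pi a^\perp\rangle = \|a^\perp\|^2 = \|\pi a^{\perp}\|^2$, whence $a^{\perp} \in \mathcal{H}_D$ and $b = \pi b \in H_D(M)$; this variant still rests on $\pi$ mapping $H(M)$ into $H_D(M)$.
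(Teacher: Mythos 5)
Your argument has a genuine gap, and it is precisely the step you flag as ``the main obstacle'': the claim that $\pi = P_{\mathcal{H}_D}$ maps $H(M)$ into $H_D(M)$. Unwinding definitions, this claim is \emph{equivalent} to the lemma itself: if $v \in H(M)$ satisfies $\pi v \in H_D(M)$, then since $v - \pi v \perp H_D(N) \supseteq H_D(M)$ we get $P_{H_D(N)}v = \pi v = P_{H_D(M)}v$, which is exactly orthogonality of $H_D(N)$ and $H(M)$ over $H_D(M)$; and conversely the lemma gives $\pi(H(M)) \subseteq H_D(M)$. So ``invoking that directly'' from the framework makes the proof circular. Neither of your two justifications closes the gap. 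First, $P_{\mathcal{H}_D}$ is not asserted to be a $\emptyset$-definable function anywhere in \cite{ChevalierHru2021}: the distance from $v$ to the closed span of $D$ is an infimum over finite linear combinations of unbounded length, hence only a countable infimum of definable predicates, and the projections $P_{\bdd(A)}$ that are ``used throughout'' are projections onto $\bdd$-closed parameter sets, a different object which is likewise not a definable function. Second, even granting $\pi(m) \in \dcl(m) \subseteq M$ by an automorphism argument, the inclusion $\dcl(m) \cap \mathcal{H}_D \subseteq H_D(M)$ that you use amounts to $H(M) \cap H_D(N) \subseteq H_D(M)$, which is again a special case of the statement being proved. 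Your ``by hand'' variant rests on the same unproved inclusion, as you note.

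The paper's proof sidesteps all of this and works from the other side of the symmetry. For $v \in H(M)$ one shows directly that $d(v, H_D(N)) = d(v, H_D(M))$: if $d(v, H_D(N)) = d$, then for each $\e > 0$ there exist $w_1, \ldots, w_{n_\e} \in D(N)$ and scalars $\lambda_i$ with $\|v - \sum \lambda_i w_i\| < d + \e$, and this is expressed by a condition with the single parameter $v \in M$, so by $M \prec N$ such witnesses already exist in $M$. Since $H_D(M) \subseteq H_D(N)$, equality of the distances forces $P_{H_D(N)}v = P_{H_D(M)}v$, which is the lemma. Your reduction of the statement to $Q(H_D(N)) \subseteq H_D(M)$ is correct as far as it goes, but the elementarity argument is the missing engine, and it is most naturally run from the $H(M)$ side rather than through a projection onto $\mathcal{H}_D$.
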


\begin{proof}
Let $v \in H(M)$. We need to show that $P_{H_D(N)} v = P_{H_D(M)} v$. It is enough to show that $d(v, H_D(N))$, the distance from $v$ to $H_D(N)$, equals $d(v, H_D(M))$. If $d(v, H_D(N)) = d$, then for any $\e$ there is a  formula which says that there exists $w_1, \ldots, w_{n_\e}$ in $D$ and scalars $\lambda_1, \ldots, \lambda_{n_\e}$ such that $\|v - \sum\lambda_i w_i\| < d + \e$. By elementarity, we can find such $w_i$ in $M$.
\end{proof}

Using the previous lemma, we lay down the convention that \emph{a $\langle x, y\rangle$-type over $M$ is a partial type in the unique formula $\langle x, y\rangle $ where $y$ is in the same sort as $x$}.

\section{Weak closure and canonical bases}\label{section: weak closure and canonical bases}

We fix $M \models T$   $\omega_1$-saturated and $p$ a type-definable set in $\mathcal{H}$.   Let $b \in \mathcal{P}(p)$ and let $(a_n)$ be a sequence in $p$ converging weakly to $b$. Define  $p^b(x)$ to be the $\langle x, y\rangle$-type over $M$  defined by $\langle x, c \rangle = \lim_n \langle a_n, c\rangle = \langle b, c\rangle$.

\begin{lemma}\label{lemma: weak closure is set of canonical bases}
For any $M \models T$ $\omega_1$-saturated, $\mathcal{P}(p)$ is a set of canonical bases for $\langle x, y \rangle$-types over $M$ consistent with $p$, where $b$ is  a canonical base for $p^b$. %If $p$ is a piece of $\mathcal{H}$, no saturation assumptions on $M$ are required. 
\end{lemma}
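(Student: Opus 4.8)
The plan is to identify $b$ as a canonical base through the automorphism characterisation, after two preliminary reductions that I would record first. First, $p^b$ is well defined: although it is presented via an auxiliary sequence $(a_n)$ converging weakly to $b$, it only records the data $\langle x, c\rangle = \langle b, c\rangle$ for $c \in H(M)$, so it depends on $b$ alone and not on the choice of $(a_n)$. Second, the assignment $b \mapsto p^b$ is injective on $\mathcal{P}(p)$: if $p^b = p^{b'}$ then $\langle b - b', c\rangle = 0$ for all $c \in H(M)$, and taking $c = b - b' \in H(M)$ forces $b = b'$. To speak of canonical bases I would then pass to a monster model $\C \succeq M$ and consider the \emph{global} $\langle x, y\rangle$-type $\widehat{p^b} = \{\langle x, c\rangle = \langle b, c\rangle \mid c \in H(\C)\}$; this is the natural $\Aut(\C/b)$-invariant extension of $p^b$, and its canonical parameter is what must be computed.

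The heart of the argument is then a one-line inner-product computation. Any $\sigma \in \Aut(\C)$ restricts to a unitary of $H(\C)$ and so preserves $\langle \cdot, \cdot \rangle$. Writing out the action of $\sigma$ on the parameters of the type and re-indexing, I get
\[
\sigma\bigl(\widehat{p^b}\bigr) = \{\langle x, \sigma(c)\rangle = \langle b, c\rangle \mid c \in H(\C)\} = \{\langle x, c\rangle = \langle \sigma(b), c\rangle \mid c \in H(\C)\} = \widehat{p^{\sigma(b)}},
\]
where the middle equality re-indexes $c$ by $\sigma(c)$ and uses the unitarity identity $\langle b, \sigma^{-1}(c)\rangle = \langle \sigma(b), c\rangle$. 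Hence $\sigma$ fixes $\widehat{p^b}$ if and only if $\langle \sigma(b) - b, c\rangle = 0$ for every $c \in H(\C)$; since $c$ now ranges over the whole monster, taking $c = \sigma(b) - b$ forces $\|\sigma(b) - b\| = 0$, that is $\sigma(b) = b$. The converse is immediate, so the stabiliser of $\widehat{p^b}$ in $\Aut(\C)$ equals $\Aut(\C/b)$, which is exactly the statement that $b$ is a canonical base for $p^b$.

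It remains to note that every $\langle x, y\rangle$-type $q$ over $M$ consistent with $p$, say prescribing $\langle x, c\rangle = \lambda(c)$, arises this way. Realising $q$ by some $a \models p$ in an elementary extension and setting $b = P_{H(M)} a$, the orthogonal projection of $a$ onto $H(M)$, I obtain $\langle b, c\rangle = \langle a, c\rangle = \lambda(c)$ for all $c \in H(M)$, so $q = p^b$; that $b \in \mathcal{P}(p)$ follows from the recalled descriptions of $\mathcal{P}(p)$ (as the weak limit points of indiscernible sequences in $p$, equivalently $\{P_{\bdd(A)} a \mid a \models p,\ A \subseteq M\}$) together with its closure under the projections $P_{\bdd(A)}$. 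I expect the only delicate points to be conceptual rather than computational: one must check that $\widehat{p^b}$ is the correct global object for the continuous-logic canonical-base formalism, so that the stabiliser computation genuinely witnesses $b \in \dcl^{eq}(p^b)$, and one uses $\omega_1$-saturation of $M$ precisely to guarantee that $\mathcal{P}(p)$ is captured by weak limits of sequences and admits the projection description invoked in the surjectivity step. The inner-product computation itself, which is the engine of the proof, is entirely elementary.
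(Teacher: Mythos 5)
Your overall strategy is the same as the paper's: the forward direction is immediate, and for surjectivity you realise $q \cup p$ in an elementary extension $N$ and take $c = P_{H(M)}a$ as the candidate canonical base, noting that $q$ is then definable over $c$. Your injectivity computation ($\langle b-b', v\rangle = 0$ for all $v$, take $v = b-b'$) and the automorphism-stabiliser characterisation are correct, modulo the paper's deliberately loose use of the term ``canonical base,'' which its remark after the lemma addresses; the paper itself only needs definability of $q$ over $c$ plus uniqueness.

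The gap is exactly at the step you defer: showing $c = P_{H(M)}a \in \mathcal{P}(p)$. You assert this ``follows from the recalled descriptions of $\mathcal{P}(p)$,'' in particular from $\mathcal{P}(p) = \{P_{\bdd(A)}a \mid a \models p,\ A \subseteq M\}$. But that description quantifies over realisations of $p$ \emph{in the $\omega_1$-saturated model $M$}, whereas your $a$ lives in $N$ and need not be (even close to) any realisation in $M$; the description does not apply to it as stated, and ``closure under the projections $P_{\bdd(A)}$'' only helps once you already have an element of $\mathcal{P}(p)$ in hand. This is the one non-trivial point of the lemma and the real reason $\omega_1$-saturation is in the hypothesis (not merely to guarantee the quoted descriptions hold). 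The paper closes it as follows: by $\omega_1$-saturation of $M$, find $a' \in M$ realising $p \cup (q \upharpoonright \bdd(c))$ --- a small partial type since $c$ is a single element of $H(M)$. Then $\langle a', v\rangle = \langle c, v\rangle$ for all $v \in \bdd(c)$ and $c \in \bdd(c)$, so $P_{\bdd(c)}a' = c$, and now the displayed description of $\mathcal{P}(p)$ applies to $a' \in M$ and yields $c \in \mathcal{P}(p)$. One could alternatively try to run your argument inside a saturated extension and pull back to $M$ using type-definability of $\mathcal{P}(p)$, but that transfer also has to be argued; it does not follow from the facts you cite.
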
 

\begin{proof}
$p^b(x)$ is a $\langle x, y \rangle$-type over $H(M)$ consistent with $p$. Conversely, let $q$ be any $\langle x, y \rangle$-type over $H(M)$ consistent with $p$. Let $M \prec N$ and let $a$ be a realisation of $q(x) \cup  p(x)$ in $N$. Let $c = P_{H(M)}(a)$. Then for all $v \in H(M)$, $\langle a, v\rangle = \langle c, v\rangle$, so $q$ is definable over $c$. We only need to check that $c \in \mathcal{P}(p)$, so that $q = p^c$.

By $\omega_1$-saturation of $M$, find $a' \in M$  a realisation of $p \cup (q \upharpoonright \bdd(c))$. Then $P_{\bdd(c)} a' = c$ but we already know that $P_{\bdd(c)} a' \in \mathcal{P}(p)$, so we are done.
\end{proof}

\noindent \textbf{Remark:} We are making a slightly unconventional use of the term `canonical base'. Canonical bases are usually dcl-closed sets. See for example \cite{Pillay1996} for a classical discussion of canonical bases. In Lemma \ref{lemma: weak closure is set of canonical bases}, nothing ensures that distinct $b, b' \in \mathcal{P}(p)$ are not inter-definable. Nevertheless, we will say that $b$ is `the' canonical base of $p^b$. With this choice of terminology, for any $b \neq b' \in \mathcal{P}(p)$, $b$ and $b'$ are canonical bases for different types over $M$ even though they may be inter-definable.

\medskip

The next lemma follows directly by applying Lemma \ref{lemma: weak closure is set of canonical bases} with the type-definable set $\mathcal{P}(p)$ instead of $p$. 

%It gives a model theoretic justification for working with the set $\mathcal{P}(p)$.
 
\begin{lemma}\label{lemma: P(p) has built-in canonical bases}
 $\mathcal{P}(p)$ has built-in canonical bases for $\langle x, y\rangle$-types over models consistent with $p$: for any $M \models T$,  for any $\langle x, y\rangle$-type $q(x)$ over $M$ consistent with $\mathcal{P}(p)$, there is a unique $b \in \mathcal{P}(p)$ in $M$ such that $q(x)$ is defined over $b$ by $\langle x, v\rangle = \langle b, v\rangle$ for every $v \in H(M)$. Therefore $q = p^b$.
\end{lemma}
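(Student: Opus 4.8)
The plan is to apply Lemma \ref{lemma: weak closure is set of canonical bases} verbatim, but with the type-definable set $\mathcal{P}(p)$ in the role of $p$. The observation that makes this legitimate is that weak closure is idempotent: since $\mathcal{P}(p)$ is by definition the weak closure of the realisations of $p$, the realisations of $\mathcal{P}(p)$ already form a weakly closed set, and hence $\mathcal{P}(\mathcal{P}(p)) = \mathcal{P}(p)$. So Lemma \ref{lemma: weak closure is set of canonical bases}, applied to $\mathcal{P}(p)$, says precisely that $\mathcal{P}(p)$ is a set of canonical bases for the $\langle x, y\rangle$-types consistent with $\mathcal{P}(p)$. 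It remains to upgrade this to the two extra features claimed here: that the canonical base already lies in $M$, and that no saturation hypothesis on $M$ is needed.

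First I retrace the proof of Lemma \ref{lemma: weak closure is set of canonical bases}. Fix $M \models T$ and a $\langle x, y\rangle$-type $q$ over $M$ consistent with $\mathcal{P}(p)$. Passing to a large $\omega_1$-saturated $\mathfrak{M} \succ M$, I choose a realisation $a$ of $q(x) \cup \mathcal{P}(p)(x)$ and set $b = P_{H(M)}(a)$. As in the earlier proof, $a - b \perp H(M)$ gives $\langle a, v\rangle = \langle b, v\rangle$ for every $v \in H(M)$, so $q$ is defined over $b$ by $\langle x, v\rangle = \langle b, v\rangle$ and $q = p^b$.

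The two new points are that $b \in M$ and $b \in \mathcal{P}(p)$. The first is exactly the content of ``built-in'' and is immediate: $b = P_{H(M)}(a)$ is by construction a vector of $H(M) \subseteq M$, so there is no need to reach back into $M$ via $\omega_1$-saturation the way the proof of Lemma \ref{lemma: weak closure is set of canonical bases} does. For the second point, I use that $\bdd(M) = M$ for a model, so $\bdd(M) \cap H$ densely spans $H(M)$ and $P_{H(M)} = P_{\bdd(M)}$; since $a \models \mathcal{P}(p)$ inside the $\omega_1$-saturated $\mathfrak{M}$, and $\mathcal{P}(p)$ is closed under the projections $P_{\bdd(A)}$ for $A \subseteq \mathfrak{M}$, it follows that $b = P_{\bdd(M)}(a) \in \mathcal{P}(p)$. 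Because $b \in M \prec \mathfrak{M}$ and $\mathcal{P}(p)$ is type-definable, this membership is absolute and is witnessed in $M$. Uniqueness is then a one-line computation: if $b, b' \in \mathcal{P}(p)$ both define $q$, then taking $v = b - b' \in H(M)$ in $\langle b, v\rangle = \langle b', v\rangle$ forces $\|b - b'\|^2 = 0$, so $b = b'$.

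The step I expect to require the most care is the replacement of saturation by the closure property, i.e. checking that $P_{H(M)} = P_{\bdd(M)}$ and that the closure of $\mathcal{P}(p)$ under $\bdd$-projections, which was established for subsets of an $\omega_1$-saturated model, legitimately applies to $P_{\bdd(M)}$ once $M$ is embedded into such a model. Everything else is a direct transcription of Lemma \ref{lemma: weak closure is set of canonical bases} together with the idempotency of weak closure.
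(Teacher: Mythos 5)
Your proof is correct and follows exactly the route the paper takes: the paper's entire justification is the one-line remark that the lemma "follows directly by applying Lemma \ref{lemma: weak closure is set of canonical bases} with the type-definable set $\mathcal{P}(p)$ instead of $p$", which is precisely your reduction. Your additional details (idempotency of weak closure, $b = P_{H(M)}(a) = P_{\bdd(M)}(a) \in \mathcal{P}(p) \cap M$, and the uniqueness computation) correctly fill in what the paper leaves implicit.
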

 
 We interpret Lemma \ref{lemma: P(p) has built-in canonical bases} as saying that the weak closure of a type-definable set is a natural object from a model theoretic point of view.

%\begin{proof}
%We work in the expansion $T'$ and we take the imaginary sort of $p^+$ to be a real sort of $T$,   so that Proposition \ref{proposition: transfer of independence} applies.  
%
%We have seen that $\mathcal{P}(p)$ is a set of canonical bases for $f$-types extending $p$. Define $\mathcal{P}(p^+) = \{P_{\bdd(A)}h^+z \mid z \models p^+, A \subseteq M\text{ small}\}$. Then $\mathcal{P}(p^+) = \mathcal{P}(p)$ by Lemma \ref{lemma: base set for partial order}. Let $r \in S_{df}(M)$ be consistent with $p^+$. For $a \models r \cup p^+$ in an elementary extension of $M$, $r$ is definable over   $P_Mh^+a \in \mathcal{P}(p^+)$.   Take $b$ the preimage  in $p^+$ of $P_Mh^+a$ under $h^+$.
%
%Now for any $v \in H(S)$, $r_H \models \langle h^+z, v\rangle = \lambda$ if and only if $\langle h^+b, v\rangle = \lambda$ if and only if $q^b_H \models \langle hx, v\rangle = \lambda$.
%\end{proof}
%From now on, we assume that the imaginary sort containing $p^+$ is a real sort of $T$. By Lemma \ref{lemma: p^+ has built-in canonical bases}, the results of Proposition \ref{proposition: transfer of independence} apply and forking independence inside $p$ or $p^+$ is equivalent to forking independence in the Hilbert space $H(S)$. 

We now define a partial order which gives model-theoretic significance to the partial order $\leq_\mathcal{P}$ on $\mathcal{P}(p)$. 
 
\begin{definition}\label{definition: model theoretic partial order}
Define the relation $L_1(x, y)$ on $\mathcal{P}(p) \times \mathcal{P}(p)$ in $M$ by saying that $L_1(b, c)$ if and only if $p^b$ extends $p^c \upharpoonright \bdd(c)$. Define the relation $\leq_1$ on $\mathcal{P}(p) \times \mathcal{P}(p)$ as the transitive closure of $L_1$.

%Define the relation $L_2$ on quantifier-free $df$-types over countable $\bdd$-closed  subsets of $S$ extending $p^+$ as follows:  if $r_1 \in S^{qf}_{df}(A)$ and $r_2 \in S^{qf}_{df}(B)$ then $L_2(r_1, r_2)$ if and only if $(r_1|M)\upharpoonright B = r_2$, where $r_1|M$ is the unique nonforking $df$-extension of $r_1$ to $M$. Let $\leq_2$ be the  transitive closure of the relation $L_2$. 
\end{definition} 
 
For any $  c \in \mathcal{P}(p)$, $p^c$ is the unique nonforking extension of $p^c \upharpoonright \bdd(c)$ to $M$, since $\langle x, y\rangle$ is a stable relation (see \cite{BenYaacov2010} for a discussion of stability in continuous logic). Therefore, $L_1(b, c)$ and $b\neq c$ if and only if $p^b$ is a forking extension of $p^c \upharpoonright \bdd(c)$.

\begin{lemma}\label{lemma: partial orders anti-isomorphic}
For any $b, c \in \mathcal{P}(p)$,  $L_1(b, c)$ if and only if there exists a small $\bdd$-closed $A\subseteq M$ such that $c = P_A b$. Hence the partial order $\leq_1$ is anti-isomorphic to $\leq_\mathcal{P}$.
\end{lemma}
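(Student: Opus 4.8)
The plan is to establish the displayed equivalence directly and then obtain the anti-isomorphism by passing to transitive closures. Throughout I write $V_A = \overline{\mathrm{span}}(A \cap H(M))$ for the closed subspace onto which $P_A$ projects, and I use that the condition ``$p^b$ extends $p^c \upharpoonright \bdd(c)$'' unwinds to the single requirement $\langle b, v\rangle = \langle c, v\rangle$ for every vector $v \in \bdd(c)$.

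For the forward direction, suppose $L_1(b,c)$ and set $A = \bdd(c)$, which is small and $\bdd$-closed. I would show $c = P_A b$ by the uniqueness of the orthogonal projection: since $c \in \bdd(c)$ we have $c \in V_A$; and for every $v \in \bdd(c) \cap H(M)$ the hypothesis gives $\langle b - c, v\rangle = 0$, so by bilinearity and continuity of the inner product $b - c \perp V_A$. Thus $c$ is the nearest point of $V_A$ to $b$, i.e. $c = P_A b$.

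For the converse, suppose $c = P_A b$ with $A$ small and $\bdd$-closed. The crux is to show that every vector of $\bdd(c)$ already lies in $V_A$. For this I use two facts: first, $V_A \subseteq \dcl(A)$, because $\dcl(A) \cap H(M)$ is a metrically closed subspace of $H(M)$ containing $A \cap H(M)$ and hence its closed span; second, $A$ is $\bdd$-closed. Since $c = P_A b \in V_A \subseteq \dcl(A)$, we get $\bdd(c) \subseteq \bdd(A) = A$, so every vector of $\bdd(c)$ lies in $A \cap H(M) \subseteq V_A$. As $c = P_A b$ gives $b - c \perp V_A$, this yields $\langle b, v\rangle = \langle c, v\rangle$ for all $v \in \bdd(c)$, which is exactly $L_1(b,c)$. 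I expect this reconciliation of the Hilbert-space closure (the closed span $V_A$) with the model-theoretic closures $\dcl$ and $\bdd$ to be the \textbf{main obstacle}: everything hinges on the passage $c \in V_A \Rightarrow c \in \dcl(A) \Rightarrow \bdd(c) \subseteq A$.

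Finally, the equivalence says that the atomic relation $L_1$ underlying $\leq_1$ is exactly the single-projection relation underlying $\leq_\mathcal{P}$, with the two arguments interchanged: $L_1(b,c)$ holds precisely when $c = P_A b$, i.e. when $c$ lies one projection-step below $b$ in $\leq_\mathcal{P}$. Passing to transitive closures, a chain $c = P_{A_1}\cdots P_{A_n} b$ witnessing $c \leq_\mathcal{P} b$ is the same datum as a chain of $L_1$-steps from $b$ down to $c$, so $b \leq_1 c$ if and only if $c \leq_\mathcal{P} b$. Hence the identity map on $\mathcal{P}(p)$ is an anti-isomorphism between $(\mathcal{P}(p), \leq_\mathcal{P})$ and $(\mathcal{P}(p), \leq_1)$.
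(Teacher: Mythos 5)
Your proof is correct and follows essentially the same route as the paper's: the paper compresses the argument into the chain of equivalences $c = P_A b \iff c = P_{\bdd(c)}b \iff p^b$ extends $p^c\upharpoonright \bdd(c)$, and your two directions (taking $A=\bdd(c)$ for one, and using $V_A\subseteq\dcl(A)$, hence $\bdd(c)\subseteq\bdd(A)=A$, for the other) are exactly the details underlying that chain. The passage to transitive closures at the end likewise matches what the paper leaves implicit.
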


\begin{proof}
For any small $\bdd$-closed $A$, $c = P_A b$ if and only if $c = P_{\bdd(c)}b$ if and only if $p^b$ extends $p ^c \upharpoonright \bdd(c)$.
\end{proof}
%To check that $\leq_2$ induces a partial order on parallelism classes of types over small $\bdd$-closed subsets of $M$, it is enough to check that if $r_1 \in S^{qf}_{df}(A)$ and $r_2 \in S^{qf}_{df}(B)$ and $z_1, z_2$ are the canonical parameters of $r_1, r_2$ in $p^+$, then $L_2(r_1, r_2)$ if and only if $L_1(z_1, z_2)$. Indeed, $r_1, r_2$ are parallel if and only if $r_1|M  = r_2|M$ if and only if $z_1 = z_2$, since $r_i$ is uniquely determined by $z_i$. 

 %Suppose $r_1, r_2$ are parallel. Observe that for $i = 1,2$, $r_i$ is uniquely determined by $z_i$, so it is enough to show that $z_1 = z_2$.  By definition, we have $(r_1|M)\upharpoonright B = r_2$ and $(r_2|M)\upharpoonright A = r_1$. Therefore, for all $v \in B \cap H$, $\langle h^+z_1 ,v\rangle = \langle h^+z_2, v\rangle$ and it follows that $\langle h^+z_1,h^+ z_2 \rangle = \langle h^+ z_2, h^+z_2 \rangle$. Similarly, $\langle h^+z_1 , h^+z_2 \rangle = \langle h^+ z_1 , h^+ z_1\rangle$ and we have $z_1 = z_2$, hence $r_1|M = r_2|M$. 

%The proof that $r_1 \leq_2 r_2$ if and only if $z_1 \leq_1 z_2$ follows from the above calculations.

In the previous lemma, we characterised $\leq_\mathcal{P}$ in terms of forking extensions of $\langle x, y\rangle$-types associated to elements of $H(M)$. Next we show that the partial order $\leq_\mathcal{P}$ can also be seen as a natural partial order between the $\langle x, y\rangle$-types themselves. 

In the following definition and lemma, a $\langle x, y\rangle$-type over a $\bdd$-closed subset of $M$ is a partial type in the single function $\langle x, y\rangle$ where $y$ ranges in the same sort as $x$ over a set $A \subseteq M$ such that $A$ is $\bdd$-closed \emph{in the sort of $x$.}

\begin{definition}\label{definition: second partial order}
Define the relation $L_2$ on $\langle x, y\rangle$-types over   $\bdd$-closed subsets of $M$ consistent with $\mathcal{P}(p)$ as follows: if $r_1$ is over $B$ and $r_2$ is over $C$, then $L_2(r_1, r_2)$ if and only if $(r_1\upharpoonright M) \upharpoonright C = r_2$, where $r_2\upharpoonright M$ is the unique nonforking extension of $r_1$ to $M$. Define the relation $\leq_2$ between types as the transitive closure of $L_2$.

%Define the relation $L_2$ on $df$-types over small $\bdd$-closed subsets of $M$ consistent with $p^+$ as follows: if $r_1 \in S_{df}(A)$ and $r_2 \in S_{df}(B)$ and $z_1, z_2$ are the canonical parameters of $r_1, r_2$ in $p^+$, then $L_2(r_1, r_2)$ if and only if $((r_1)_H|M)\upharpoonright \bdd(z_2) = (r_2)_H \upharpoonright \bdd(z_2)$, where $(r_1)_H|M$ is the nonforking extension of $(r_1)_H$ to $M$.
%
%Define the relation $\leq_2$ as the transitive closure of $L_2$. 
\end{definition}

For the next lemma, recall that types $r_1, r_2$ over $B, C$ respectively are said to be \emph{parallel} if the nonforking extensions $r_1 \upharpoonright M$ and $r_2 \upharpoonright M$ are equal.

\begin{lemma}\label{lemma: third partial order coincides}
Let $B, C$ be   $\bdd$-closed subsets of $M$. Let $r_1$, $r_2$ be $\langle x, y\rangle$-types over $B, C$ respectively consistent with  $p$ with canonical parameters $b, c$ in $\mathcal{P}(p)$. Then $L_2(r_1, r_2)$ if and only if $L_1(b, c)$. 

Hence $\leq_2$ induces a partial order on parallelism classes of $\langle x, y\rangle$-types over small $\bdd$-closed subsets of $M$ consistent with $p$. This partial order is anti-isomorphic to $\leq_\mathcal{P}$.

% Let $r_1 \in S_{df}(A)$ and $r_2 \in S_{df}(B)$ where $A,B$ are $\bdd$-closed and let $z_1, z_2 \models p^+$ be the canonical parameters of $r_1$, $r_2$ respectively. Then $L_2(r_1, r_2)$ if and only if $L_1(z_1, z_2)$. Hence $\leq_2$ induces a  partial order on parallelism classes of $df$-types over small $\bdd$-closed subsets of $M$ consistent with $p^+$. 
\end{lemma}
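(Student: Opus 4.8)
The plan is to reduce both $L_2$ and $L_1$ to statements about orthogonal projections of the canonical parameters $b,c$, and then to compare them. First I would pin down the canonical parameters. Since the nonforking extension of $r_1$ to $M$ does not fork over $B$, stability of $\langle x,y\rangle$ together with Lemma~\ref{lemma: P(p) has built-in canonical bases} gives $r_1\upharpoonright M = p^b$ with $b\in\bdd(B)=B$, and likewise $r_2\upharpoonright M = p^c$ with $c\in\bdd(C)=C$. Applying the same two facts to the type $(r_1\upharpoonright M)\upharpoonright C = p^b\upharpoonright C$, its own canonical parameter is the unique element of $\mathcal{P}(p)\cap C$ inducing the same $\langle x,y\rangle$-type over $C$, which is exactly $P_C b$ (here I use that $\mathcal{P}(p)$ is closed under $P_{\bdd(\cdot)}$, so $P_C b\in\mathcal{P}(p)\cap C$). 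Since two $\langle x,y\rangle$-types over $C$ with canonical parameters already lying in $C$ coincide iff these parameters agree, this shows $L_2(r_1,r_2)$ holds iff $P_C b = c$. On the other side, Lemma~\ref{lemma: partial orders anti-isomorphic} already records that $L_1(b,c)$ holds iff $c = P_{\bdd(c)} b$.

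With both relations in projection form, the forward implication is immediate: because $c\in C$ and $C$ is $\bdd$-closed we have $\bdd(c)\subseteq C$, so $P_{\bdd(c)} = P_{\bdd(c)}P_C$, whence $P_C b = c$ forces $P_{\bdd(c)} b = P_{\bdd(c)} c = c$. This gives $L_2(r_1,r_2)\Rightarrow L_1(b,c)$. The hard part is the reverse implication, which is where the genuine subtlety lies: $L_1(b,c)$ only delivers $P_{\bdd(c)} b = c$, and when $C$ is strictly larger than $\bdd(c)$ one may have $P_C b\neq c$, so $L_2$ can fail for this particular pair of representatives. The resolution I would use is that this is harmless once we pass to parallelism classes, which is precisely what the second half of the lemma asks. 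Every $\langle x,y\rangle$-type consistent with $p$ is parallel to the canonical representative $p^b\upharpoonright\bdd(b)$ over the bounded closure of its canonical parameter; replacing $r_1,r_2$ by $p^b\upharpoonright\bdd(b)$ and $p^c\upharpoonright\bdd(c)$ changes neither parallelism class, and for these representatives the projection set in $L_2$ is exactly $\bdd(c)$, so $L_1(b,c)$ and $L_2$ now agree in both directions.

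Finally I would assemble the anti-isomorphism. By Lemma~\ref{lemma: P(p) has built-in canonical bases} the assignment of the canonical parameter of its nonforking extension is a bijection from parallelism classes of $\langle x,y\rangle$-types over small $\bdd$-closed subsets of $M$ consistent with $p$ onto $\mathcal{P}(p)$, and parallelism is exactly equality of canonical parameters. By the two previous paragraphs $L_2$ and $L_1$ coincide on canonical representatives, so passing to transitive closures yields $[r_1]\leq_2[r_2]$ iff $b\leq_1 c$. Composing with the anti-isomorphism between $\leq_1$ and $\leq_\mathcal{P}$ from Lemma~\ref{lemma: partial orders anti-isomorphic} then shows that $\leq_2$ is anti-isomorphic to $\leq_\mathcal{P}$; in particular $\leq_2$ inherits antisymmetry and is a genuine partial order on parallelism classes. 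I expect the main obstacle to be precisely the mismatch flagged above between the parameter set $C$ appearing in $L_2$ and the bounded closure $\bdd(c)$ appearing in $L_1$, which is exactly why the clean correspondence must be phrased at the level of parallelism classes rather than for arbitrary representatives.
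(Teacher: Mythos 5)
Your proposal follows the same route as the paper: identify the canonical parameters, reduce $L_2(r_1,r_2)$ to the projection identity $P_C b = c$ and $L_1(b,c)$ to $P_{\bdd(c)}b = c$ via Lemma \ref{lemma: partial orders anti-isomorphic}, and then transport the order through the bijection between parallelism classes and $\mathcal{P}(p)$. The one place you diverge is the direction $L_1(b,c)\Rightarrow L_2(r_1,r_2)$, and your caution there is warranted rather than excessive: the paper's proof derives the chain $L_2(r_1,r_2)\Leftrightarrow P_C b = c$ and then simply cites Lemma \ref{lemma: partial orders anti-isomorphic} for the equivalence with $L_1(b,c)$, but that lemma only identifies $L_1(b,c)$ with the existence of \emph{some} $\bdd$-closed $A$ satisfying $c = P_A b$ (equivalently $c=P_{\bdd(c)}b$), whereas $L_2$ demands the identity for the \emph{specific} set $C$ over which $r_2$ lives. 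Only the implication $P_C b = c \Rightarrow P_{\bdd(c)}b = c$ is automatic (from $c\in C$ and $C$ being $\bdd$-closed); the converse requires $b-c$ to be orthogonal to all of $C$ and not merely to $\bdd(c)$, which can fail when $C$ properly contains $\bdd(c)$. Your repair --- passing to the canonical representatives $p^b\upharpoonright\bdd(b)$ and $p^c\upharpoonright\bdd(c)$, for which the two relations visibly coincide, and noting that the one implication that always holds suffices to match the transitive closures on parallelism classes --- recovers exactly the second, operative half of the lemma (the anti-isomorphism of $\leq_2$ with $\leq_\mathcal{P}$ on parallelism classes), which is what the rest of the paper uses. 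So your argument is correct and, on this point, more careful than the paper's own; the first sentence of the lemma should really be read for canonical representatives, or with $L_2$ understood up to parallelism, as you suggest.
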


\begin{proof}
If $A \subseteq M$ is a   $\bdd$-closed subset and $r$ is a $\langle x, y\rangle$-type over $A$ consistent with $p$, then $r$ has a canonical base $b \in A$ and Lemma \ref{lemma: P(p) has built-in canonical bases} shows that $r = p^b \upharpoonright A$.

Therefore $L_2(r_1, r_2)$ if and only if $p^b \upharpoonright C = r_2$ if and only if  for all $v \in C$, $\langle b, v\rangle = \langle c, v\rangle$ if and only if $P_C b = c$. By Lemma \ref{lemma: partial orders anti-isomorphic}, this is equivalent to $L_1(b, c)$.

$r_1, r_2$ are parallel if and only if $r_1\upharpoonright M = r_2 \upharpoonright M$ if and only if $p^b = p^c$ if and only if $b = c$, by Lemma \ref{lemma: P(p) has built-in canonical bases}. Hence $\leq_2$ induces a partial order on parallelism classes and these parallelism classes are in an obvious bijection with $\mathcal{P}(p)$.
\end{proof}

\section{Ranks in scattered interpretable Hilbert spaces}\label{section: ranks}

In this section, we fix $M \models T$ a large saturated model and $p$ a type-definable set in $\mathcal{H}$.  Recall the following theorem from \cite{ChevalierHru2021}:

\begin{theorem}[\cite{ChevalierHru2021}]\label{projections commute}
Let $p$ be a scattered type-definable set in $\mathcal{H}$.   Let $A$, $B$ be $\bdd$-closed subspaces of $H(M)$. Then $A \cap H_p(M)$ and $B \cap H_p(M)$ are orthogonal over $A \cap B \cap H_p(M)$. Equivalently,  for any $v \in A \cap H_p(M)$, we have $P_{B}v = P_{A \cap B}v$. Equivalently, for any $v \in H_p(M)$,
\[
 P_{B} P_{A}v = P_{A}P_{B}v = P_{A \cap B}v\]
\end{theorem}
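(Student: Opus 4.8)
The plan is to prove the equivalent pointwise form and reduce everything to a single inclusion that is exactly one-basedness in the sense of the dictionary of Section~\ref{section: weak closure and canonical bases}. First I would observe that all three displayed identities are equivalent and that, by symmetry in $A$ and $B$, it suffices to prove $P_A P_B v = P_{A\cap B} v$ for all $v$. Since $P_A P_B - P_{A\cap B}$ is a bounded operator and $H_p(M)$ is the closed linear span of the realisations of $p$, all of which lie in $\mathcal{P}(p)$, it is enough to verify this identity for $v \in \mathcal{P}(p)$; here I use that $\mathcal{P}(p)$ is closed under the projections $P_{\bdd(\cdot)}$, so that the relevant projections preserve $H_p(M)$ and $\mathcal{P}(p)$, and the identity extends from $\mathcal{P}(p)$ to $H_p(M)$ by linearity and continuity.

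Next I would perform the key algebraic reduction. Fix $v \in \mathcal{P}(p)$ and set $w = P_B v \in \mathcal{P}(p) \cap B$. Since $A \cap B \subseteq B$ and $A \cap B \subseteq A$, one has $P_A P_B v = P_A w$ and $P_{A\cap B} v = P_{A\cap B} w = P_{A\cap B}(P_A w)$, so that $P_A P_B v = P_{A\cap B} v$ holds if and only if $P_A w \in A \cap B$, i.e. (as $P_A w \in A$ automatically) if and only if $P_A w \in B$. Thus the theorem reduces to the statement: for every $w \in \mathcal{P}(p)$ lying in a $\bdd$-closed subspace $B$, we have $P_A w \in B$. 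By Lemma~\ref{lemma: P(p) has built-in canonical bases} and Lemma~\ref{lemma: partial orders anti-isomorphic}, $P_A w$ is precisely the canonical base in $\mathcal{P}(p)$ of the restriction $p^{w}\upharpoonright A$, that is $P_A w = \Cb(w/A)$ in the present sense. Since $w \in B$ and $B$ is $\bdd$-closed, $\bdd(w) \subseteq B$, so it would suffice to establish one-basedness: $\Cb(w/A) \in \bdd(w)$.

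The remaining task, which is the heart of the matter, is to deduce $\Cb(w/A) \in \bdd(w)$ from scatteredness. Here I would invoke von Neumann's alternating projection lemma: $P_{A\cap B}v = \lim_n (P_A P_B)^n v$ in norm, and every iterate lies in $\mathcal{P}(p)$ with $(P_A P_B)^{n+1} v \leq_{\mathcal{P}} (P_A P_B)^n v$, so $P_{A\cap B}v$ is approached from above along a $\leq_{\mathcal{P}}$-descending chain inside $\mathcal{P}(p)$, with strictly decreasing norms unless the chain has already stabilised. The plan is to use the hypothesis that $\mathcal{P}(p)$ is locally compact to force this chain to collapse, equivalently to force $P_A w \in \bdd(w)$: if $\Cb(w/A)$ were not in $\bdd(w)$, its conjugates over $w$ would form an infinite, metrically separated family of elements of $\mathcal{P}(p)$ all lying below $w$ in $\leq_{\mathcal{P}}$, and --- combining the Halmos description of the relative position of the two subspaces $A$, $B$ with the homogeneity of $M$ --- one produces from this a subconfiguration of $\mathcal{P}(p)$ incompatible with local compactness.

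I expect this last step to be the main obstacle. The subtlety is that topological local compactness of a single alternating orbit is not by itself obstructive, since such an orbit converges and is harmless; so the contradiction must genuinely exploit the definable, model-theoretic content of scatteredness together with the saturation of $M$, turning a positive ``angle'' between $A$ and $B$ on $H_p(M)$ into a family of automorphic conjugates rich enough to violate local compactness of $\mathcal{P}(p)$. Once one-basedness $\Cb(w/A) \in \bdd(w) \subseteq B$ is in hand, the reduction of the second paragraph gives $P_A P_B v = P_{A\cap B} v$ for all $v \in \mathcal{P}(p)$; the symmetric statement $P_B P_A v = P_{A\cap B} v$ follows by exchanging $A$ and $B$, and together they yield $P_A P_B v = P_B P_A v = P_{A\cap B} v$ on $\mathcal{P}(p)$ and hence, by density, on all of $H_p(M)$.
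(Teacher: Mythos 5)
Your reductions are correct and match the intended architecture of the argument: the three displayed identities are indeed equivalent, it suffices to check the identity on $\mathcal{P}(p)$ by density and linearity, and setting $w=P_Bv$ correctly reduces the theorem to showing $P_Aw\in B$ for every $w\in\mathcal{P}(p)$ lying in a $\bdd$-closed subspace $B$, i.e.\ to one-basedness $P_Aw\in\bdd(w)$. This is exactly the equivalence recorded in the lemma following Theorem~\ref{projections commute}, and the paper itself does not reprove the theorem but defers to \cite{ChevalierHru2021}, indicating only that the proof rests on Von Neumann's alternating projection lemma. So up to the last step you are on the same track as the source.

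The genuine gap is the step you yourself flag: deducing $P_Aw\in\bdd(w)$ (equivalently, stabilisation of the alternating sequence $(P_AP_B)^nv$) from local compactness of $\mathcal{P}(p)$. Your sketched contradiction does not work as stated. If $P_Aw\notin\bdd(w)$ you do get an infinite $\e$-separated family of conjugates of $P_Aw$ over $w$ inside $\mathcal{P}(p)$, all $\leq_\mathcal{P}w$ and all of norm at most $\|w\|$; but a bounded, uniformly separated set in a Hilbert space is closed, discrete and hence locally compact (an orthonormal basis is the standard example), so this configuration is not by itself incompatible with local compactness of $\mathcal{P}(p)$. To obtain a contradiction you must produce an actual failure of local compactness, i.e.\ exhibit a point of $\mathcal{P}(p)$ every neighbourhood of which contains an infinite separated family; this requires controlling the \emph{location} of the conjugates, not just their mutual distances. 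Compare the proof of Proposition~\ref{proposition: scattered implies V rank} in this paper, where the norm identity $\|a_n-a_m\|^2=\|a_n\|^2-\|a_m\|^2$ along a $\leq_\mathcal{P}$-monotone chain is used to show the chain is Cauchy, and the contradiction with local compactness is then extracted \emph{at the limit point} by producing an indiscernible sequence accumulating there. An analogous quantitative argument (the alternating iterates are Cauchy because their norms decrease with orthogonal increments, and a strictly decreasing chain forces a non-locally-compact accumulation point) is what the missing step needs; the appeal to ``the Halmos description of the relative position of $A$ and $B$ plus homogeneity'' is not a substitute for it.
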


The proof of Theorem \ref{projections commute} hinges on an application of Von Neumann's lemma (see \cite{VonNeumann1950}). We refer the reader to \cite{ChevalierHru2021} for the proof. 
We show that the conclusion of Theorem  \ref{projections commute} is equivalent to a local form of one-basedness.

\begin{definition}
Let $A $ be a small $\bdd$-closed subspace of $H(M)$ and let $q(x)$ be a  $\langle x, y\rangle$-type  over $A$ consistent with $p$. We say that $q$ is one-based in $p$ if for any realisation $a$ of $q \cup p$ in $ M$, $q$ is definable over $A \cap \bdd(a) \cap H_p(M)$. 

%For small $\bdd$-closed $A\subseteq H(M)$, a $\langle x, y \rangle$-type $q(x)$ over $A$ is one-based if for any realisation $a$ of $q$ in $M$, $q$ is definable over $A \cap \bdd(a)$.

 \end{definition}

See \cite{Pillay1996} for a general treatment of one-basedness in classical logic. 

\begin{lemma}
$\langle x, y\rangle$-types over $\bdd$-closed subsets of $M$ consistent with $p$ are one-based in  $p$ if and only for all $\bdd$-closed $A, B \subseteq M$, $P_AP_B = P_{A \cap B }$ on $H_p(M)$.
\end{lemma}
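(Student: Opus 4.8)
The plan is to recast one-basedness as a pure statement about projections and then prove the two implications separately, the forward one being the substantial direction. First I would unwind the definition. By Lemma~\ref{lemma: third partial order coincides}, every $\langle x,y\rangle$-type $q$ over a $\bdd$-closed $A$ consistent with $p$ is of the form $p^b\upharpoonright A$, where its canonical base $b=P_A a\in A\cap H_p(M)\cap\mathcal{P}(p)$ for any realisation $a\models q\cup p$. Since $q$ is defined over a set $D$ exactly when $P_D b=b$, and $b\in A\cap H_p(M)$ automatically, the requirement that $q$ be definable over $A\cap\bdd(a)\cap H_p(M)$ amounts to $b=P_A a\in\bdd(a)$. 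Hence the left-hand side of the lemma is equivalent to the concrete assertion: for every $\bdd$-closed $A\subseteq M$ and every $a\models p$, $P_A a=P_{A\cap\bdd(a)}a$, i.e. $P_A a\in\bdd(a)$.

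The backward direction is then immediate. Assuming $P_AP_B=P_{A\cap B}$ on $H_p(M)$ for all $\bdd$-closed $A,B$, fix $a\models p$ and apply the hypothesis with $B:=\bdd(a)$. Since $P_{\bdd(a)}a=a$, we get $P_A a=P_AP_{\bdd(a)}a=P_{A\cap\bdd(a)}a\in\bdd(a)$, which is exactly one-basedness in the reformulation above.

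For the forward direction I assume one-basedness. By density of the realisations of $p$ in $H_p(M)$ and continuity of the projections, it suffices to prove $P_AP_B a=P_{A\cap B}a$ for $a\models p$ and $\bdd$-closed $A,B$. Put $w:=P_B a$. Applying one-basedness to the $\langle x,y\rangle$-type of $a$ over $B$, whose canonical base is $w$, gives $w\in B\cap\bdd(a)$; in particular $w\in\mathcal{P}(p)$ and $\bdd(w)\subseteq B$. Using $A\cap B\subseteq B$ we have $P_{A\cap B}a=P_{A\cap B}w$ and $P_AP_B a=P_A w$, so the identity reduces to showing $P_A w\in B$: if $P_A w\in B$ then $P_A w\in A\cap B$, whence $P_{A\cap B}w=P_{A\cap B}(P_A w)=P_A w$.

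The crux is therefore to establish $P_A w\in\bdd(w)$ for the element $w\in\mathcal{P}(p)$, since then $P_A w\in\bdd(w)\subseteq B$ closes the argument. This is precisely one-basedness transported from $p$ to its weak closure $\mathcal{P}(p)$: the type $p^w\upharpoonright A$ has canonical base $P_A w$, and I would like to realise it by $w$ itself, regarded as a realisation of $\mathcal{P}(p)$, so that Lemma~\ref{lemma: P(p) has built-in canonical bases} delivers $P_A w\in\bdd(w)$. The hard part will be exactly this transfer: one-basedness is only assumed for types realised in $p$, whereas $w$ realises $\mathcal{P}(p)$, so I must argue that the two localisations coincide — either by proving directly that one-basedness for $p$ implies one-basedness for $\mathcal{P}(p)$ (here $\mathcal{P}(\mathcal{P}(p))=\mathcal{P}(p)$ and $H_{\mathcal{P}(p)}(M)=H_p(M)$ are the relevant facts), or by realising $p^w$ in an elementary extension $N\succ M$ by some $a^{*}\models p$ with $P_{H(M)}a^{*}=w$ and running the special-case computation there. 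In lattice-theoretic terms this is the passage from modularity witnessed by the special subspaces $\bdd(a)$ to modularity of the whole lattice of $\bdd$-closed subspaces, which is the content of the equivalence with Theorem~\ref{projections commute}.
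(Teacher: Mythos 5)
Your overall route is the same as the paper's: reformulate one-basedness as the statement that $P_A a \in A \cap \bdd(a)$ for $a \models p$, dispose of the backward direction by taking $B := \bdd(a)$, and for the forward direction reduce $P_AP_Ba = P_{A\cap B}a$ to showing that the canonical parameter $P_A w$ of the $\langle x,y\rangle$-type of $w := P_Ba$ over $A$ lies in $\bdd(w) \subseteq B$. All of this is correct and agrees with the paper's proof (which runs the projections in the order $P_BP_A$ rather than $P_AP_B$, an immaterial difference).

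The problem is that you stop at exactly the point where the content lies. You correctly observe that the remaining claim, $P_A w \in \bdd(w)$ for $w \in \mathcal{P}(p)$, is an application of one-basedness to an element of $\mathcal{P}(p)$ rather than to a realisation of $p$, and you offer two possible ways to bridge this (transfer one-basedness from $p$ to $\mathcal{P}(p)$, or realise $p^w$ by some $a^*\models p$ and compute there) without carrying out either; as written the forward direction is therefore unfinished. For comparison, the paper performs this step in one line: it applies the hypothesis to the $\langle x,y\rangle$-type of $w$ over the second subspace with $w$ itself as the realisation, i.e.\ it reads the one-basedness hypothesis as applying to canonical bases in $\mathcal{P}(p)$ (note that $p^w\upharpoonright A$ is consistent with $p$ by Lemma \ref{lemma: weak closure is set of canonical bases} and that $H_{\mathcal{P}(p)}(M)=H_p(M)$), and does not comment on the distinction you raise. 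Be aware that your second suggested repair is not automatic: if $a^*\models p$ realises $p^w\upharpoonright \bdd(A\cup\bdd(w))$, then one-basedness for the type of $a^*$ over $A$ yields $P_Aw=P_Aa^*\in A\cap\bdd(a^*)$, which is not visibly contained in $B$ or in $\bdd(w)$, so closing the gap really does require the transfer to $\mathcal{P}(p)$ (or reading the hypothesis as the paper does), not merely a change of realisation.
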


\begin{proof}
%By Lemma \ref{lemma: P(p) has built-in canonical bases}, we can replace $p$ by $\mathcal{P}(p)$ in the statement of the lemma.

Suppose first that $\langle x, y \rangle$-types consistent with $p$ are one-based in $p$ and take $A, B \subseteq M$ $\bdd$-closed. Let $a$ be any realisation of $p$ and let $b = P_Aa \in A$. $b$ is the canonical parameter of the $\langle x, y\rangle$-type of $a$ over $A$. Then $P_BP_A a = P_Bb$,  which is the canonical parameter of the $\langle x,y \rangle$-type of $b$ over $B$.
 By one-basedness  $P_B b \in B \cap \bdd(b) \subseteq A \cap B$. Therefore $P_BP_Aa  \in A \cap B$. By the same computation as in the proof of Theorem \ref{projections commute}, we deduce $P_Aa \in A \cap B$ and hence $P_B P_Aa = P_{A \cap B}$ on $H_p(M)$.

%The proof that if $df$-types are one-based then $f$-types are one-based is similar. 

%Suppose now that $f$-types consistent with $p$ are one-based and let $A, B \subseteq M$ be $\bdd$-closed. Take $x$ in $ p$ and let $z$ in $p^+$ be the canonical parameter of $\tp_f(x /B)$, so that $h^+z = P_Bhx$ and $z \in \bdd(x) \cap B$. Then $P_AP_Bhx = P_Ah^+ z$, which is the canonical parameter of $\tp_{df}(z / A)$. By one-basedness, this is a vector in  $ \bdd(z) \cap A$, so $P_AP_B hx \in A \cap B$. 

Conversely,   let $q$ be a $\langle x, y\rangle$-type over a $\bdd$-closed $A \subseteq M$ consistent with $p$. Let $a$ be a realisation of $q \cup p$ in $M$. Let $b = P_A a$ be the canonical parameter of $q$. Then $b = P_A P_{\bdd(a)}a \in A \cap \bdd(a)$. 
\end{proof}

\noindent \textbf{Remark:} In \cite{ChevalierHru2021}, one-basedness in interpretable Hilbert spaces was found to be the key technical fact which made it possible to derive structure theorems for scattered interpretable Hilbert spaces. Indeed, using one-basedness, it is easy to prove that when $p$ is scattered, $(\mathcal{P}(p), \leq_\mathcal{P})$ is well-founded, and this allows a decomposition of $\mathcal{H}_{\mathcal{P}}$ into subspaces generated by asymptotically free sets. Recall also from \cite{ChevalierHru2021} that the structure theorem for scattered interpretable Hilbert spaces   eventually leads us to the classification of unitary representations of oligomorphic groups given in \cite{Tsankov2012}. 

Conversely, it is easy to  see directly that if $p$ is an asymptotically free type-definable set then $\langle x, y\rangle$-types over $\bdd$-closed sets are one-based. Therefore, in the case of classical logic $\omega$-categorical theories, one-basedness follows from the classification theorem of \cite{Tsankov2012}. This has been well-known to several authors since Tsankov's original result. 
 In the $\omega$-categorical setting, the paper \cite{BenYaacov2017} studies  conditions under which one-basedness in interpretable Hilbert spaces gives rise to full one-basedness for stable independence in the  classical logic $\omega$-categorical setting. See Theorem 3.12 in \cite{BenYaacov2017}.

\medskip

  We now turn to a general discussion of properties of the partial order $\mathcal{P}(p)$.

 \begin{definition}
If $(\mathcal{Q}, \leq_\mathcal{Q})$ is an arbitrary partial order, we define the foundation rank $F_\mathcal{Q}(x)$ of $x \in \mathcal{Q}$ as follows:
\begin{enumerate}
\item $F_\mathcal{Q}(x) \geq 0$ for all $x$
\item $F_\mathcal{Q}(x) \geq \lambda$ for limit ordinal $\lambda$ if $F(x) \geq \alpha$ for all $\alpha < \lambda$
\item $F_\mathcal{Q}(x) \geq \alpha + 1$ if there is $y <_\mathcal{Q} x$ such that $F_\mathcal{Q}(y) \geq \alpha$
\end{enumerate}
We say that $F_\mathcal{Q}(x) = \infty$ if $F_\mathcal{Q}(x) \geq \alpha$ for every ordinal $\alpha$ and   $F_\mathcal{Q}(x) = \alpha$ if $F_\mathcal{Q}(x) \geq \alpha$ and $F_\mathcal{Q}(x) \not \geq \alpha + 1$. 

We write $F_\mathcal{Q}(\mathcal{Q}) = \sup\{ F_\mathcal{Q}(x) \mid x \in \mathcal{Q}\}$.
\end{definition}

\begin{proposition}\label{proposition: ordinal foundation rank gives one-based}
Let $F_\mathcal{P}$ be the foundation rank of $(\mathcal{P}(p), \leq_\mathcal{P})$. If for every  $x \models p$ we have $F_{\mathcal{P}}(x) < \infty$, then $\langle x, y\rangle $-types consistent with $p$ are one-based in $p$.  
\end{proposition}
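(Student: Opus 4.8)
The plan is to prove the contrapositive, first reducing one-basedness to a commutation statement about projections and then contradicting well-foundedness of $\mathcal{P}(p)$. By the preceding lemma, $\langle x,y\rangle$-types consistent with $p$ are one-based in $p$ if and only if $P_A P_B = P_{A\cap B}$ on $H_p(M)$ for all $\bdd$-closed $A,B \subseteq M$, so it suffices to show that the failure of this commutation for one pair $A,B$ forces some $x \models p$ with $F_\mathcal{P}(x) = \infty$. Before that I would upgrade the hypothesis: since $\mathcal{P}(p) = \{P_{\bdd(C)} a \mid a \models p,\ C \subseteq M\}$, every element of $\mathcal{P}(p)$ lies $\leq_\mathcal{P}$ some realisation of $p$, and monotonicity of the foundation rank then promotes ``$F_\mathcal{P}(x) < \infty$ for all $x \models p$'' to ``$F_\mathcal{P}$ is ordinal-valued on all of $\mathcal{P}(p)$'', i.e. $(\mathcal{P}(p),\leq_\mathcal{P})$ is well-founded.

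Next comes the descent. Assume $P_A P_B \neq P_{A\cap B}$ on $H_p(M)$ for some $\bdd$-closed $A,B$. Starting from a realisation $a \models p$, the alternating sequence $a,\, P_B a,\, P_A P_B a,\, P_B P_A P_B a,\dots$, obtained by applying one projection at a time, is weakly decreasing for $\leq_\mathcal{P}$ and stays inside $\mathcal{P}(p)$, since $\mathcal{P}(p)$ is closed under $P_A$ and $P_B$ (these are $P_{\bdd(A)}$ and $P_{\bdd(B)}$). By Von Neumann's lemma it converges to $P_{A\cap B}a$. If for some $a$ the sequence has infinitely many distinct terms, it is an infinite strictly descending chain in $\mathcal{P}(p)$, whence $F_\mathcal{P}(a) = \infty$, contradicting well-foundedness. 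Everything therefore reduces to producing an $a \models p$ for which the descent does not stabilise after finitely many steps.

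\textbf{Main obstacle and resolution.} For a fixed pair $A,B$ the alternating projections can reach $P_{A\cap B}a$ in finitely many steps even when $P_A P_B a \neq P_{A\cap B}a$ (for instance when $(P_A P_B)^2 a = P_{A\cap B}a$), so a careless choice of $a$ need not yield an infinite chain; this is the crux of the argument. I would resolve it spectrally. Consider the self-adjoint contraction $S := P_A P_B P_A$ on $A \cap H_p(M)$. A direct norm computation shows that $S$ is a projection (necessarily onto $A \cap B$) if and only if $P_A P_B = P_{A\cap B}$ on $H_p(M)$; hence our assumption forces the spectrum of $S$ to meet $(0,1)$, so some spectral subspace $V$ of $S$ attached to a closed subinterval of $(0,1)$ is nonzero.

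To finish, I would choose the starting vector using $V$. Since $V$ meets the kernel of $S$ trivially and realisations of $p$ densely span $H_p(M)$, a short orthogonality argument produces $a \models p$ with $P_V(P_A P_B a) \neq 0$. For this $a$ one has $(P_A P_B)^k a = S^{k-1}(P_A P_B a)$ for $k \geq 1$, and testing against $P_V(P_A P_B a)$ gives inner products that are the moments of a nonzero spectral measure supported in $(0,1)$; these strictly decrease in $k$, so the terms $(P_A P_B)^k a$ are pairwise distinct. This is the desired infinite descending $\leq_\mathcal{P}$-chain inside $\mathcal{P}(p)$, contradicting well-foundedness and closing the contrapositive. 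The heart of the proof is thus the spectral dichotomy for $S$, which is precisely where Von Neumann's lemma and the mechanism behind Theorem \ref{projections commute} enter.
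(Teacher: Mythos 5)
Your proof is correct and follows essentially the same route as the paper: the alternating projection sequence $P_AP_B\cdots P_Ba$ is a $\leq_\mathcal{P}$-descending chain in $\mathcal{P}(p)$, so the ordinal-valued foundation rank forces it to stabilise, and the Von Neumann-type computation behind Theorem \ref{projections commute} then yields $P_AP_B=P_{A\cap B}$. The only difference is presentational: the paper argues directly and delegates the final step to ``the argument of Theorem \ref{projections commute}'', whereas you run the contrapositive and make that step explicit via the spectral dichotomy for $S=P_AP_BP_A$, which is a correct and more self-contained way of filling in the same mechanism.
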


\begin{proof}
 This is similar to the argument behind Theorem \ref{projections commute}. For any $\bdd$-closed $A, B  \subseteq M$, for any $a \models p$, the sequence of alternating projections $P_AP_B \ldots P_A P_Ba$ must eventually be constant, since $F_\mathcal{P}$ is ordinal-valued. The argument of Theorem \ref{projections commute} shows that we must then have $P_A P_B = P_{A \cap B}$ so $\langle x, y\rangle$-types are one-based.
\end{proof}

In light of the anti-isomorphism between $\leq_\mathcal{P}$ and the partial orders $\leq_1$ and $\leq_2$, it is natural to define a local continuous logic version of $U$-rank which captures forking inside the Hilbert space. In the next definition, we call this the $V$-rank. The reader is referred to \cite{Pillay1996} for a general discussion of  U-rank in classical logic. 

 \begin{definition}
For any $\bdd$-closed subset $A$ of $M$ and $q$ a $\langle x, y\rangle$-type over $A$ in $\mathcal{H}$, define the relation $V(q) \geq \alpha$ for $\alpha$ an ordinal as follows:

 \begin{enumerate}
\item $V(q) \geq 0$ for all $q$
\item $V(q) \geq \lambda$ for limit ordinal $\lambda$ if for every $\alpha < \lambda$, $V(q)\geq \alpha$
\item $V(q)\geq \alpha + 1$ if there is some    $B\supseteq A$ and  $q'$ over $B$  extending $q$ such that $V(q') \geq \alpha$ and $q'$  forks over $A$ with respect to the function $\langle x, y \rangle$. 
 \end{enumerate}
If $V(q) \geq \alpha$ for all $\alpha$, we say $V(q) = \infty$. We say $V(q) = \alpha$ if $V(q) \geq \alpha$ and $V(q) \not \geq \alpha+1$. 

For $a \in H(M)$, write $V(a/A)$ for the $V$-rank of the $\langle x, y\rangle$-type of $a$ over $A$.  
 \end{definition}

 \begin{lemma}\label{lemma: V rank as foundation}
Let $A$ be a small $\bdd$-closed subspace of $H(M)$ and let $q$ be a $\langle x, y\rangle$-type over $A$ consistent with $p$. Let $b$ be the canonical parameter of $q$ in $\mathcal{P}(p)$. Then $V(q)$ is equal to the foundation rank of $b$ in $(\mathcal{P}(p), \leq_1)$.
 
\end{lemma}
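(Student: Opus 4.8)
The plan is to prove, by induction on the ordinal $\alpha$ and uniformly over all pairs $(A,q)$, the equivalence that $V(q) \geq \alpha$ if and only if $F_{\leq_1}(b) \geq \alpha$, where $b$ is the canonical parameter of $q$. Since the two ranks obey parallel recursions, the base case and the limit case are immediate, and the whole argument concentrates on the successor step. The engine of that step is a dictionary between forking extensions of $q$ and strict $\leq_1$-predecessors of $b$, which I set up first. Fix $a \models q \cup p$, so that $b = P_A a \in A \cap \mathcal{P}(p)$ and $q = p^b \upharpoonright A$ by Lemma \ref{lemma: P(p) has built-in canonical bases}. Because $\bdd(b) \subseteq A$ and $p^b$ is the unique nonforking extension of $p^b \upharpoonright \bdd(b)$ to $M$, the type $p^b$ is also the unique global nonforking extension of $q$. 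Now let $B \supseteq A$ be $\bdd$-closed and let $q'$ over $B$ extend $q$, with canonical parameter $b' \in \mathcal{P}(p)$. Extension forces $P_A b' = b$ (as $A \subseteq B$, whence $P_A b' = P_A a' = b$ for $a' \models q'\cup p$), and, $\langle x,y\rangle$ being stable, $q'$ does not fork over $A$ precisely when $p^{b'} = p^b$, i.e.\ when $b' = b$ by Lemma \ref{lemma: P(p) has built-in canonical bases}. Conversely, every $b' \in \mathcal{P}(p)$ with $P_A b' = b$ and $b' \neq b$ is the canonical parameter of the forking extension $p^{b'} \upharpoonright \bdd(A \cup \{b'\})$. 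Hence the canonical parameters of forking extensions of $q$ are exactly the members of $S(b,A) := \{b' \in \mathcal{P}(p) : P_A b' = b,\ b' \neq b\}$.

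With this dictionary, the forward direction is straightforward. If $V(q) \geq \alpha + 1$, choose a forking extension $q'$ over $B$ with $V(q') \geq \alpha$ and canonical parameter $b' \in S(b,A)$. Then $b = P_A b'$ gives $b \leq_\mathcal{P} b'$, so $b' <_1 b$ by the anti-isomorphism of Lemma \ref{lemma: partial orders anti-isomorphic}, while the induction hypothesis applied to $q'$ over $B$ gives $F_{\leq_1}(b') \geq \alpha$; thus $F_{\leq_1}(b) \geq \alpha + 1$. The reverse direction is where the difficulty lies. I first note that $F_{\leq_1}$ may be computed using one-step predecessors: since $\leq_1$ is the transitive closure of $L_1$ and $F_{\leq_1}$ is monotone, $F_{\leq_1}(b) \geq \alpha + 1$ holds if and only if there is some $c$ with $L_1(c,b)$, $c \neq b$ and $F_{\leq_1}(c) \geq \alpha$. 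For such a one-step predecessor, Lemma \ref{lemma: partial orders anti-isomorphic} gives the canonical projection $P_{\bdd(b)} c = b$. I would now like to use $c$ as the canonical parameter of a forking extension of $q$, but this requires $c \in S(b,A)$, i.e.\ $P_A c = b$; a priori I only have $P_{\bdd(b)} c = b$, and since $\bdd(b) \subseteq A$ the projection $P_A$ need not annihilate the component of $c$ in $A \ominus \bdd(b)$. Thus $S(b,A)$ can be a proper subset of the set of strict $\leq_1$-predecessors of $b$, and this base mismatch is the main obstacle.

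To resolve it, I move $c$ into $S(b,A)$ without changing its foundation rank, using the homogeneity of the saturated model $M$ together with the existence of nonforking extensions. Absorbing the parameters of $p$ into the language, $\mathcal{P}(p)$ and $\leq_1$ are $\Aut(M)$-invariant, so $\Aut(M)$ acts on the poset $(\mathcal{P}(p),\leq_1)$ preserving $F_{\leq_1}$. I take $\sigma \in \Aut(M/\bdd(b))$ with $\sigma(c) \ind_{\bdd(b)} A$, obtained by realising the nonforking extension of the $\langle x,y\rangle$-type of $c$ over $\bdd(b)$ to $A$ and invoking strong homogeneity. Writing $c' := \sigma(c) \in \mathcal{P}(p)$, we get $P_{\bdd(b)} c' = \sigma(b) = b$, and by the projection characterisation of $\langle x,y\rangle$-nonforking, $P_A c' = P_{\bdd(b)} c' = b$; moreover $c' \neq b$ and $F_{\leq_1}(c') = F_{\leq_1}(c) \geq \alpha$. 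Hence $c' \in S(b,A)$, so $p^{c'} \upharpoonright \bdd(A \cup \{c'\})$ is a forking extension of $q$ with canonical parameter $c'$, and the induction hypothesis gives that its $V$-rank is $\geq \alpha$; therefore $V(q) \geq \alpha + 1$, completing the successor step and the induction.
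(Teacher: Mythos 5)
Your proof is correct, and its core dictionary --- that the canonical parameters of forking extensions of $q$ are exactly the strict $L_1$-predecessors of $b$, so that the two ranks satisfy the same recursion --- is precisely the observation on which the paper's proof rests; the paper simply states this equivalence and says ``the lemma follows easily.'' Where you go beyond the paper is in noticing that the recursions do not match on the nose: a one-step $\leq_1$-predecessor $c$ of $b$ only satisfies $P_{\bdd(b)}c = b$, whereas a forking extension of $q$ over $A \supseteq \bdd(b)$ requires $P_A c = b$, and the inclusion $S(b,A) \subseteq \{c : L_1(c,b),\, c \neq b\}$ can be strict. This is a genuine gap in the ``follows easily'' step --- it is exactly the parallelism-invariance of the $V$-rank, the analogue of the standard fact that $U$-rank depends only on the parallelism class of a type --- and your resolution (replace $c$ by an $\Aut(M/\bdd(b))$-conjugate $c'$ whose $\langle x,y\rangle$-type over $A$ is the nonforking one, so that $P_A c' = P_{\bdd(b)}c' = b$ while $F_{\leq_1}(c') = F_{\leq_1}(c)$ by invariance of $\leq_1$) is the right one. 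The only ingredient you should flag explicitly is the local stability fact you are leaning on: that the complete type $\tp(c/\bdd(b))$ is consistent with the nonforking $\langle x,y\rangle$-extension of $\tp_{\langle x,y\rangle}(c/\bdd(b))$ to $A$, which holds because $\bdd(b)$ is bdd-closed and $\langle x,y\rangle$ is stable (see \cite{BenYaacov2010}); with that reference in place, your argument is a complete and more careful version of the paper's.
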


\begin{proof}
Note that $b \in A$. We have already observed that for any $c \in \mathcal{P}(p)$, $L_1(b, c)$ and $b \neq c$ if and only if $p^b$ is a forking extension of $p^c \upharpoonright \bdd(c)$. The lemma follows easily.
\end{proof}

Recall from \cite{ChevalierHru2021} that if $p$ is a type-definable set in an interpretable Hilbert space $\mathcal{H}$, we say that the inner product is \emph{strictly definable on $p$} if the set of scalars $\{ \langle x, y \rangle \mid x, y \models p\}$ is finite. We show that in this case,  the $V$-rank coincides with the Shelah $\omega$-local rank defined in \cite{Shelah1978a}. We recall the definition of this rank here:

\begin{definition}[\cite{Shelah1978a}, II.1.1]\label{definition: shelah rank}
In an arbitrary theory $T$, let $\Delta(x, y)$ be a set of   formulas and $p(x)$ a type-definable set, possibly with parameters. For $\alpha$ an ordinal, we define $R_\Delta(p) \geq \alpha$ as follows:
\begin{enumerate}
\item $R_\Delta(p)\geq 0$ if $p$ is consistent
\item For $\lambda$ a limit ordinal $R_\Delta(p)\geq \lambda$ if $R_\Delta(p)\geq \alpha$ for all $\beta < \alpha$
\item $R_\Delta(p)\geq \alpha +1$ if for every finite $p' \subseteq p$ and every $n < \omega$ there are $\Delta$-types $(q_m(x))_{m < n}$ such that:
\begin{enumerate}
\item for $m \neq m' < n$, $q_m(x) \cup q_{m'}(x)$ is inconsistent
\item $R_\Delta(p' \cup q_m)\geq \alpha$ for all $m$.
\end{enumerate}
\end{enumerate}
We write $R_\Delta(p) = \alpha$ if $R_\Delta(p)\geq \alpha$ and $R_\Delta(p)\not\geq \alpha+1$.
\end{definition}

Since the above definition is borrowed from \cite{Shelah1978a}, we chose to keep the term `formula'. Since we are working in continuous logic, the term `condition' might be more appropriate, to use the formalism of \cite{BenYaacov2008}. Alternatively, the formalism given in the appendix of \cite{ChevalierHru2021} keeps the term `formula'. In any case, when the inner product is strictly definable, any expression of the form $\langle x, y\rangle = \lambda$ behaves like a classical logic formula, since it admits negation. Therefore in the next proposition, we will draw on results from classical logic and we will keep our terminology in line with this.

\begin{proposition}\label{proposition: finite rank in strictly definable hilbert space}
Suppose $S$ is a piece of $\mathcal{H}$ such that the inner product  is strictly definable on $S$.  Let $\Delta$ be the finite set of formulas $\langle x, y \rangle  = \lambda$ where $x, y$ range in $S$. Then for every $a \in \mathcal{P}(S)$ and $A \subseteq M$ $\bdd$-closed, $V(a/A) = R_\Delta(a/A) < \omega$. 
\end{proposition}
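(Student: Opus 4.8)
The plan is to reduce the problem to classical local stability theory for the finitely many Boolean formulas produced by strict definability, and then to use the poset $(\mathcal{P}(S), \leq_\mathcal{P})$ to control finiteness. First I would record the consequences of strict definability. Write $\Lambda = \{\langle x, y\rangle \mid x, y \models S\}$, which is finite by hypothesis. Since a finite set is closed and every element of $\mathcal{P}(S)$ is a weak limit of realisations of $S$, a short computation with iterated weak limits shows that $\langle b, c\rangle \in \Lambda$ for all $b, c \in \mathcal{P}(S)$. Consequently each condition $\langle x, y\rangle = \lambda$ with $\lambda \in \Lambda$ admits a genuine negation and behaves as a classical two-valued formula, so $\Delta$ is a finite set of classical formulas. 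Because $\langle x, y\rangle$ is a stable relation (\cite{BenYaacov2010}), each formula in $\Delta$ is stable, and continuous-logic forking with respect to the predicate $\langle x, y\rangle$ coincides with classical $\Delta$-forking in the sense of \cite{Shelah1978a}. This reduction is what lets me compare $V$ and $R_\Delta$ at all.

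Next I would prove the equality $V(q) = R_\Delta(q)$ by induction on ordinals, showing $V(q) \geq \alpha \iff R_\Delta(q) \geq \alpha$. The zero and limit clauses are immediate from the definitions. For the successor step I invoke the standard local stability characterisation of $R_\Delta$ for a finite set of stable formulas: $R_\Delta(q) \geq \alpha + 1$ if and only if $q$ has a $\Delta$-forking extension $q'$ with $R_\Delta(q') \geq \alpha$ (for stable $\Delta$, $\Delta$-forking agrees with $\Delta$-dividing and $R_\Delta$ drops precisely at forking; see \cite{Shelah1978a} and \cite{Pillay1996}). Under the identification of $\Delta$-forking with $\langle x, y\rangle$-forking from the first step, this is exactly the recursive clause defining the $V$-rank. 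Hence both ranks obey the same recursion and agree as ordinals.

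Finally I would establish finiteness. Let $b = P_A a \in \mathcal{P}(S)$ be the canonical parameter of the $\langle x, y\rangle$-type $q$ of $a$ over $A$. By Lemma \ref{lemma: V rank as foundation}, $V(a/A)$ equals the foundation rank of $b$ in $(\mathcal{P}(S), \leq_1)$, and by Lemma \ref{lemma: partial orders anti-isomorphic} this order is anti-isomorphic to $\leq_\mathcal{P}$; so a strictly descending chain below $b$ in $\leq_1$ is a strictly ascending chain $b <_\mathcal{P} c_1 <_\mathcal{P} c_2 < \cdots$ in $\leq_\mathcal{P}$. Consecutive elements are related by an orthogonal projection, and since a proper projection strictly decreases the norm (Pythagoras), the values $\|c_i\|^2 = \langle c_i, c_i\rangle$ strictly increase along the chain; as they all lie in the finite set $\Lambda$, any such chain has length at most $|\Lambda|$. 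Therefore the foundation rank of $b$ is finite, so $V(a/A) < \omega$, and by the equality just proved $R_\Delta(a/A) = V(a/A) < \omega$.

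The step I expect to be the main obstacle is the successor clause of the equality: matching Shelah's combinatorial rank $R_\Delta$, defined by splitting a type into arbitrarily many pairwise-inconsistent $\Delta$-types, with the forking-based clause of the $V$-rank. This rests on the local stability theory for the finite set $\Delta$ of (now classical) stable formulas and on carefully checking that continuous-logic forking with respect to $\langle x, y\rangle$ is the same notion as classical $\Delta$-forking; verifying the hypotheses of the classical results in our continuous-logic-reduced-to-classical setting is the delicate point.
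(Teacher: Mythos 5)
Your proposal is correct, and while the rank-equality half follows the same route as the paper, your finiteness argument is genuinely different. For $V(q)=R_\Delta(q)$ both you and the paper reduce to classical local stability for the finite set $\Delta$ of stable formulas and use the fact that $R_\Delta$ drops exactly at forking extensions; the one delicate point you flag (passing from Shelah's successor clause, which only splits finite subtypes $p'\subseteq p$ into $n$ pairwise inconsistent $\Delta$-types, to an honest forking extension of the full type) is handled in the paper by an explicit device you omit, namely an auxiliary rank $R^*_\Delta$ with an $\omega$-splitting successor clause, shown equal to $R_\Delta$ via type-definability of $R_\Delta(q)\geq n$ (Shelah II.2.9) and compactness; that is the tool to import if you want your successor step self-contained. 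Where you truly diverge is the bound $V<\omega$: the paper obtains $R_\Delta<\omega$ from Shelah II.2.7 (finiteness of the local rank of a stable formula) and transfers it to $V$, whereas you bound $V$ directly through Lemma \ref{lemma: V rank as foundation}, noting that a strictly descending $\leq_1$-chain is a strictly ascending $\leq_\mathcal{P}$-chain, that a proper orthogonal projection strictly decreases norm so the squared norms strictly increase along the chain, and that these squared norms lie in the finite value set $\Lambda$, which your preliminary weak-limit computation shows is preserved on $\mathcal{P}(S)$. This is more elementary, avoids stability theory entirely for the finiteness part, and yields the explicit bound $V(a/A)\leq |\Lambda|-1$, which the paper's argument does not provide; the paper's route, on the other hand, gives finiteness of $R_\Delta$ independently of the identification with $V$.
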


\begin{proof}
This is proved entirely by using results of \cite{Shelah1978a}. We sketch the proof here for convenience. Let $q$ be a $\langle x,y \rangle$-type over $A$ consistent with $p$. Define the rank $R^*_\Delta(p)$ in the same way as in Definition \ref{definition: shelah rank} except that we say $R^*_\Delta(q) \geq \alpha+1$ if there are $\Delta$-types $(q_i)_{i < \omega}$ which are pairwise inconsistent and $R_\Delta^*(q \cup q_i) \geq \alpha$ for all $i$.

Now all references are from \cite{Shelah1978a}. In general for $\Delta$ a finite set of stable formulas, we can assume that $\Delta$ is a single stable formula by II.2.1. By II.2.7, $R_\Delta(q) < \omega$. We see  from definitions that $R_\Delta^*(q) \leq R_\Delta(q)$. By II.2.9, the property $R_\Delta(q) \geq n$ is type-definable. It then follows by a compactness argument that $R^*_\Delta(q) \geq R_\Delta(q)$. Therefore $R_\Delta(q) = R^*_\Delta(q)$. 

Now it follows directly from the definition of $R^*_\Delta$ that if $R_\Delta(q) = n+1$,   there is a $\Delta$-type $q'$ over some $B \supseteq A$ such that $R_\Delta(p \cup q) = n$.  Furthermore, II.1.2 and III.4.1 show that $q'$ does not fork over $A$ if and only if $R_\Delta(q') = R_\Delta(q)$. The proposition follows.
\end{proof}

In the remainder of this paper, we will be interested in understanding the relations between scatteredness, one-basedness, the $F_\mathcal{P}$-rank, and the $V$-rank.  In \cite{ChevalierHru2021}, it is shown that if $p$ is scattered, then $(\mathcal{P}(p), \leq_\mathcal{P})$ is well-founded so the $F_\mathcal{P}$-rank is ordinal valued. We record   some other connections between these different notions.

 \begin{proposition}\label{proposition: scattered implies V rank}
If  $p$ is scattered then for every $x \in \mathcal{P}(p)$, $V(x) < \infty$. Equivalently, if $p$ is scattered, there is no infinite $\leq_\mathcal{P}$-increasing sequence in $\mathcal{P}(p)$.
 \end{proposition}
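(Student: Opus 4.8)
My plan is to reduce the statement to an ascending chain condition, carry out the Hilbert-space computation that forces a strongly convergent limit, and then extract a contradiction from local compactness. First I would observe that the two formulations are immediately interchangeable: by Lemma \ref{lemma: V rank as foundation}, $V(x)$ equals the foundation rank of $x$ in $(\mathcal{P}(p),\leq_1)$, and by Lemma \ref{lemma: partial orders anti-isomorphic} the order $\leq_1$ is anti-isomorphic to $\leq_\mathcal{P}$ via the identity on $\mathcal{P}(p)$. Hence $V(x)<\infty$ for all $x$ is the same as saying $(\mathcal{P}(p),\leq_1)$ is well-founded, which, under the anti-isomorphism, is exactly the ascending chain condition for $\leq_\mathcal{P}$. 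So it suffices to show that scatteredness rules out an infinite strictly $\leq_\mathcal{P}$-increasing sequence. I would stress that this is the \emph{dual} of the well-foundedness (descending chain condition) for $(\mathcal{P}(p),\leq_\mathcal{P})$ established in \cite{ChevalierHru2021}, and is therefore not a formal consequence of it.

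Next I would assume for contradiction an infinite chain $z_0 <_\mathcal{P} z_1 <_\mathcal{P} \cdots$ and use Theorem \ref{projections commute}. Since $p$ is scattered, the projections onto $\bdd$-closed subspaces commute on $H_p(M)$, so any product of them collapses to the projection onto the intersection; thus $v \leq_\mathcal{P} u$ holds if and only if $v = P_C u$ for a \emph{single} $\bdd$-closed $C$. Consequently, for $i \leq j$ we may write $z_i = P_{C}z_j$ and compute $\langle z_i, z_j\rangle = \langle P_C z_j, z_j\rangle = \|z_i\|^2$, whence $\|z_j - z_i\|^2 = \|z_j\|^2 - \|z_i\|^2$ (this is the computation already used for Theorem \ref{projections commute}). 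Because $\mathcal{P}(p)$ lies in a single, bounded piece of $\mathcal{H}$, the norms $\|z_n\|$ are increasing and bounded, hence convergent, so $(z_n)$ is strongly Cauchy and converges to some $z_\infty$ in the norm-closed set $\mathcal{P}(p)$. Using commutativity once more, $z_n = P_{\bigcap_{l\geq n} C_l}\,z_\infty$, so $z_n <_\mathcal{P} z_\infty$ strictly for every $n$, and the increments $z_{n+1}-z_n$ are pairwise orthogonal.

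The main obstacle is the final step: deriving a contradiction from local compactness of $\mathcal{P}(p)$ at the strict supremum $z_\infty$. I expect this to be the hard part, because purely topological input is insufficient — a strictly increasing sequence converging to its supremum is perfectly compatible with local compactness (as the picture of a convergent sequence inside a compact neighborhood shows), and the orthogonal increments, having square-summable norms, only reproduce a Hilbert-cube-type configuration rather than a non-totally-bounded one. The contradiction must therefore exploit the finer structure of $\mathcal{P}(p)$: its closure under all projections $P_{\bdd(A)}$ together with the saturation of $M$. The plan is to use these to manufacture, inside a compact neighborhood of $z_\infty$, an infinite \emph{discrete} family in $\mathcal{P}(p)$ (points pairwise bounded apart), contradicting compactness. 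Since a descending $\leq_\mathcal{P}$-chain yields the identical analytic picture — strong convergence to a strict infimum with pairwise orthogonal increments — the cleanest route is likely to isolate whatever mechanism proves well-foundedness in \cite{ChevalierHru2021} and apply it verbatim to the reversed order; pinning down that mechanism and checking it is genuinely order-symmetric is the crux of the argument.
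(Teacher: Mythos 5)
Your reduction and the analytic core of your argument coincide with the paper's proof: the paper also takes an infinite increasing chain $(a_n)$, uses one-basedness (Theorem \ref{projections commute}) to get $\bdd(a_n) \subseteq \bdd(a_{n+1})$ and pairwise orthogonal increments $w_n = a_{n+1} - a_n$, notes that $\|a_n\|^2 = \|a_0\|^2 + \sum_{k<n}\|w_k\|^2$ is increasing and bounded, and concludes that $(a_n)$ is norm-Cauchy with limit $a \in \mathcal{P}(p)$ satisfying $a_n = P_{\bdd(a_n)}a$ and $a_n \neq a$ for all $n$. Up to that point the proposal is sound (your formula $z_n = P_{\bigcap_{l \geq n} C_l} z_\infty$ is more than is needed; one just passes $P_{\bdd(a_n)}$ through the norm limit).

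The final step, however, which you explicitly flag as ``the crux'' and leave open, is a genuine gap, and it is exactly where the contradiction is earned. The missing mechanism is not an order-reversal of the well-foundedness argument but a direct use of the characterisation of $\mathcal{P}(p)$ as the set of weak limits of indiscernible sequences, together with stability of $\langle x, y\rangle$. Since $a_n = P_{\bdd(a_n)}a$ with $a_n \neq a$, the type $\tp(a/\bdd(a_n))$ is non-algebraic, so by saturation there is an infinite indiscernible sequence $(b_k)$ of its realisations starting at $a$ and converging weakly to $a_n$; these $b_k$ lie in the type-definable set $\mathcal{P}(p)$. Stability gives $\langle b_k, b_j \rangle = \langle a_n, a_n\rangle$ for $k \neq j$, hence $\|b_k - b_j\|^2 = 2\|a\|^2 - 2\|a_n\|^2 = 2\|a - a_n\|^2 = \|b_k - a\|^2$. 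Thus for every $n$ one obtains an infinite subset of $\mathcal{P}(p)$ which is $\sqrt{2}\,\|a - a_n\|$-separated and contained in the closed ball of that same radius about $a$; since $\|a - a_n\| \to 0$, every neighbourhood of $a$ contains an infinite uniformly separated set, so $\mathcal{P}(p)$ is not locally compact at $a$. Your remark that purely topological input cannot suffice is correct, but the resolution is this equilateral-simplex construction rather than the hoped-for symmetry with descending chains, and without it the proof is incomplete.
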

 
 \begin{proof}
  Suppose   that $p$ is scattered and that $(a_n)$ is an infinite increasing sequence in $ \mathcal{P}(p)$. Then $a_n = P_{\bdd(a_n)} a_{n+1}$ for all $n$. By one-basedness, $\bdd(a_n) \subseteq \bdd(a_{n+1})$ for all $n$. Write $V_n$ for the subspace $\bdd(a_n)$ and $W_n$ for the orthogonal complement of $V_n$ in $V_{n+1}$. Then for all $n$, $a_{n+1} = a_n + w_n$ where $w_n \in W_n$ is orthogonal to $a_n$. Hence $a_n = a_0 + \sum_{i = 0}^{n-1} w_i$. 

Since every $a_n$ is in the type-definable set $\mathcal{P}(p)$, $\|a_n\|$ is bounded above and the sequence $(\|a_n\|)$ is convergent. Now $\|a_n\|^2 = \|a_0\|^2 + \sum_{k = 0}^{n-1}\|w_k\|^2$ since the vectors $(w_k)$ and $a_0$ are pairwise orthogonal. For $n \geq m$, 
\[
\|a_n - a_m\|^2 = \|\sum_{k = m}^{n-1}w_n\|^2 = \sum_{k = m}^{n-1}\|w_n\|^2 = \|a_n\|^2 - \|a_m\|^2
\]
so $(a_n)$ is Cauchy and hence convergent to some $a \in \mathcal{P}(p)$. Note that  $a_n \leq_{\mathcal{P}} a$ for all $n$. Let $\e > 0$ and take $n$ such that $\|a -a_n\| < \e$. There is an indiscernible sequence $(b_k)$ in $\tp(a/\bdd(a_n))$ starting at $a$ and converging weakly to $a_n$. For all $k$, $\|b_k - a\|^2 = 2\|a\|^2 - 2 \langle b_k, a\rangle = 2\|a\|^2 - 2\langle a_n, a\rangle$. Choosing $\e$ small enough, we find that $(b_k)$ is arbitrarily close to $a$ and hence $\mathcal{P}(p)$ is not scattered, a contradiction.
 \end{proof}

 The next proposition studies a case where scatteredness fails in a strong way. 
 
 \begin{proposition}
If $p$ is a complete type and is not locally compact, then $F_{\mathcal{P}}(a) \geq \omega$ for   $a \models p$. 
\end{proposition}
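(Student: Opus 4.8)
The plan is to prove $F_\mathcal{P}(a) \geq n$ for every finite $n$; since $\omega$ is a limit ordinal, the definition of the foundation rank then gives $F_\mathcal{P}(a) \geq \omega$. Because $p$ is a complete type, $\Aut(M)$ acts transitively on its realisations and preserves both $\mathcal{P}(p)$ and $\leq_\mathcal{P}$, so $F_\mathcal{P}$ is constant on the realisations of $p$; it therefore suffices to exhibit, for each $n$, one realisation $a \models p$ admitting a strictly $\leq_\mathcal{P}$-decreasing chain $a = b^{(0)} >_\mathcal{P} b^{(1)} >_\mathcal{P} \cdots >_\mathcal{P} b^{(n)}$ of length $n$.

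First I would reformulate non-local-compactness in terms of the inner product. All realisations of $p$ have a common norm $\rho = \|a\|$, and $\|x-y\|^2 = 2\rho^2 - 2\langle x,y\rangle$, so the metric on the (closed, hence complete) set of realisations is governed entirely by $\langle x,y\rangle$. Since $p$ is not locally compact, no closed ball around $a$ is totally bounded, and by homogeneity this holds at every realisation. Extracting by Ramsey an indiscernible sequence from a $\delta$-separated family inside $B(a,\epsilon)$ yields, for every $\epsilon > 0$, a non-constant indiscernible sequence $(a_i)$ in $p$ with common value $\langle a_i, a_j\rangle = c$ where $c < \rho^2$ and $\rho^2 - c \leq 2\epsilon^2$. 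Its weak limit $b \in \mathcal{P}(p)$ satisfies $\|b\|^2 = c$ and $b = P_{\bdd(a_i : i \geq 1)}a_0$, so $b <_\mathcal{P} a_0$ by Lemma \ref{lemma: partial orders anti-isomorphic}, with controllable norm-drop $\|a_0\|^2 - \|b\|^2 = \rho^2 - c$. This already gives $F_\mathcal{P}(a) \geq 1$ via descents of arbitrarily small length.

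To reach length $n$ I would build an $n$-level indiscernible configuration $(a_t)_{t \in \omega^n}$ inside $p$ whose Gram matrix is hierarchical: $\langle a_s, a_t\rangle = \lambda_k$ depends only on the length $k$ of the longest common initial segment of $s$ and $t$, with $\rho^2 = \lambda_n > \lambda_{n-1} > \cdots > \lambda_0 \geq 0$ strictly decreasing. Such hierarchical matrices are positive semi-definite, and taking iterated weak limits coordinate-by-coordinate -- first letting the last coordinate tend to infinity to obtain $b^{(1)}$, then the previous one, and so on -- produces the desired chain in $\mathcal{P}(p)$. Indeed, at each stage the relevant family is an indiscernible sequence derived from the array with common value $\lambda_{n-k-1}$, so $b^{(k+1)}$ is the weak limit of an indiscernible sequence through $b^{(k)}$ and hence $b^{(k+1)} \leq_\mathcal{P} b^{(k)}$ (again by Lemma \ref{lemma: partial orders anti-isomorphic} and closure of $\mathcal{P}(p)$ under $P_{\bdd(A)}$); the strict inequalities $\|b^{(k+1)}\|^2 = \lambda_{n-k-1} < \lambda_{n-k} = \|b^{(k)}\|^2$ force $b^{(k+1)} \neq b^{(k)}$.

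The main obstacle is the realisability of this hierarchical configuration inside the single complete type $p$. Non-local-compactness directly supplies only the top level, namely single-value indiscernible sequences with value near $\rho^2$, and the hard part is to show that non-total-boundedness \emph{propagates downward}, so that fresh, mutually close but distinct directions remain available at each successive level. I would handle this recursively: having realised a level by a weak limit $b$, one argues that the family of all such weak limits is itself not totally bounded in $\mathcal{P}(p)$ -- for otherwise, running the Cauchy argument of Proposition \ref{proposition: scattered implies V rank} in reverse, $p$ would be locally compact -- and then extracts among these weak limits an indiscernible sequence with a common value $\lambda_{k-1}$ strictly below the current level $\lambda_k$ but as close to it as desired, furnishing the next level. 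Making this propagation precise, in particular controlling the cross-level inner products and verifying indiscernibility of the derived sequences of weak limits, is the technical heart of the argument.
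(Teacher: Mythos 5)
Your reduction to exhibiting, for each finite $n$, a strictly decreasing chain of length $n$ below some (equivalently, by homogeneity, every) realisation of $p$ is sound, and your first step --- extracting a non-constant indiscernible sequence in $p$ inside an arbitrarily small ball around $a$ and passing to its weak limit to obtain some $b <_\mathcal{P} a$ --- is exactly how the paper starts. But the core of your argument, the $n$-level hierarchical array, rests on the unproven claim that non-total-boundedness \emph{propagates downward} to the family of weak limits at each successive level. You flag this yourself as the technical heart, and the justification you sketch (``running the Cauchy argument of Proposition \ref{proposition: scattered implies V rank} in reverse'') is a non sequitur: that proposition concerns increasing chains and the convergence of their norms, and total boundedness of the set of weak limits near $a$ does not in any evident way produce a compact neighbourhood of $a$ inside $p$. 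Nothing in the hypotheses forces the lower levels of $\mathcal{P}(p)$ to be non-compact (the paper's own examples show that $\mathcal{P}(p)$ can be locally compact away from $p$ itself), so the recursion as described does not get off the ground, and with it the control of the cross-level inner products needed for joint indiscernibility of the array.

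The paper sidesteps this entirely with a different induction: given a chain $b_0 <_\mathcal{P} \cdots <_\mathcal{P} b_n <_\mathcal{P} a$, it uses non-local-compactness \emph{only at the top}, producing for arbitrarily small $\delta$ a weak limit $c$ of a $\delta$-separated indiscernible sequence through $a$, with $c \neq b_n$ because $\|a - c\|$ can be made smaller than $\|a - b_n\|$. It then pushes $c$ down through the existing chain by setting $c_{n+1} = c$ and $c_i = P_{\bdd(b_i)} c_{i+1}$. Since projections are contractions, each $c_i$ lies within $\delta$ of $b_i$, so choosing $\delta$ small relative to the gaps $\|b_{k+1} - b_k\|$ keeps the $c_i$ pairwise distinct and yields a strictly decreasing chain below $a$ with one more element. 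Replacing your hierarchical-array construction by this ``insert at the top and project down'' step is the missing idea; the rest of your outline (the reduction via homogeneity and the top-level extraction) can be kept as is.
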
 

\begin{proof}
Take $a \models p$ in $M$. Note that $a \neq 0$ and $\{b \in \mathcal{P}(p)\mid b < a\}$ is nonempty. Suppose that we have $b_0 <_\mathcal{P} \ldots  <_\mathcal{P}b_n <_{\mathcal{P}} a $ in $\mathcal{P}(p)$. Suppose that $\|a - b_n\|  = \e$. Since $p$ is not locally compact around  $a$, we can find an infinite indiscernible sequence $(a_k)$ with $a_0 = a$ such that $\|a_k - a_j\| = \delta < \e$ for all $k \neq j$, for arbitrarily small $\delta$. Write $c$ for the weak limit of $(a_k)$. Then $\|a - c\| < \|a - b_n\|$ so $c\neq b_n$. Since $b_n = P_{\bdd(b_n)}a$ and $c = P_{\bdd(c)}a$, we see that $c \notin \bdd(b_n)$. Moreover, $\|P_{\bdd(b_n)} c - b_n\| \leq \|c - a\| = \delta$, so we can assume that $P_{\bdd(b_n)}c \notin \bdd(b_{n-1})$ by taking $\delta$ small enough. 

Taking into account all distances $\|b_{k+1} - b_k\|$, choosing $\delta$ small enough and writing $c_{n+1} = c$, $c_i = P_{\bdd(b_i)} c_{i+1}$, we have $c_0 <_\mathcal{P} \ldots <_\mathcal{P} c_{n+1} <_\mathcal{P} a$. This proves that $F_\mathcal{P}(a) \geq \omega$.
\end{proof}

%Since the $V$-rank is a local version of the classical $U$-rank, we have $V(r) \leq U(r)$ when we are in a classical logic setting. Therefore the next proposition gives an interesting criterion for obtaining one-basedness.
% 
%\begin{proposition}\label{proposition: finite V rank gives well-founded}
%Assume that there is some $n \in \Nn$ such that for all $1$-types  $r \in S_{df}(A)$ consistent with $p^+$, $V(r) \leq n$ where $A \subseteq M$ is small and $\bdd$-closed. Suppose that $n$ is the least such integer. Then for any $r \in S_{df}(A)$, if $z$ is the canonical parameter of $r$ in $p^+$ then $V(r) = n-F_\mathcal{P}(h^+z)$.  Hence $f$-types over $M$ consistent with $p$ are one-based.
%\end{proposition}
%
%\begin{proof}
%This follows directly from Lemmas  \ref{lemma: partial orders anti-isomorphic}, \ref{lemma: V rank as foundation} and Proposition \ref{proposition: ordinal foundation rank gives one-based}.  
%\end{proof}

 \medskip

% 
% \begin{proposition}
% If $\sup F_\mathcal{P}(x) < \omega$ for all $x \in \mathcal{P}(p)$, then $p$ is scattered
% \end{proposition}
% 
% \begin{proof}
% approximation argument
% \end{proof}
 
\begin{proposition}\label{proposition: bad examples}
There are examples of theories $T$ with interpretable Hilbert spaces $\mathcal{H}$ and type-definable sets $p$ satisfying any of the following:
\begin{enumerate}
 \item $p$ is not locally compact,  $(\mathcal{P}(p), \leq_\mathcal{P})$ has foundation rank $\omega$ but $(\mathcal{P}(p), \leq_1)$ is not well-founded
 \item $p$ is not locally compact,  $(\mathcal{P}(p), \leq_1)$ has foundation rank $\omega$, $(\mathcal{P}(p), \leq_\mathcal{P})$ is not well-founded, every type in $\mathcal{P}(p)$ is locally compact and $\langle x, y\rangle$-types consistent with $p$ are one-based in $p$. 
 \item $p$ is not locally compact, $\langle x, y\rangle$-types consistent with $p$ are one-based in $p$ but $(\mathcal{P}(p), \leq_\mathcal{P})$ and $(\mathcal{P}(p), \leq_1)$ are not well-founded.
 \end{enumerate}
\end{proposition}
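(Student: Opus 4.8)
The three statements are existence claims, so the plan is to prove each by exhibiting an explicit theory $T$, an interpretable Hilbert space $\mathcal{H}$, and a type-definable set $p$, and then to read off the orders $\leq_\mathcal{P}$ and $\leq_1$, their foundation ranks, and one-basedness from the dictionary built in Sections~\ref{section: weak closure and canonical bases} and~\ref{section: ranks}. In each case I would build $\mathcal{H}$ so that the lattice of $\bdd$-closed subspaces of $H_p(M)$ and the projections $P_A$ onto them are completely explicit; the cleanest way is to take $\mathcal{H}$ to be an $L^2$- or $\ell^2$-space attached to a definable measure or definable set carrying a definable family of substructures, so that the relevant $P_A$ are conditional expectations or coordinate restrictions. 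Since $\mathcal{P}(p)=\{P_{\bdd(A)}a\mid a\models p,\ A\subseteq M\}$ and $\leq_1$ is anti-isomorphic to $\leq_\mathcal{P}$ (Lemma~\ref{lemma: partial orders anti-isomorphic}), controlling this lattice controls everything: $\leq_\mathcal{P}$-descending chains are iterated projections, $\leq_1$-descending chains are exactly $\leq_\mathcal{P}$-ascending chains, and one-basedness is precisely commutation $P_AP_B=P_{A\cap B}$ on $H_p(M)$.

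For statements~(2) and~(3) I would use \emph{nested} filtrations, which automatically give commuting projections and hence one-basedness. For~(2), take a decreasing sequence of definable substructures inducing sub-$\sigma$-algebras $\mathcal{F}_0\supseteq\mathcal{F}_1\supseteq\cdots$ with trivial tail, and let $p$ consist of vectors whose conditional expectations $a_n=P_{\mathcal{F}_n}a$ are all distinct; the reverse-martingale limit $a_n\to 0$ produces an infinite $\leq_\mathcal{P}$-descending chain, so $\leq_\mathcal{P}$ is not well-founded, while the reversed (well-ordered) index makes $(\mathcal{P}(p),\leq_1)$ have foundation rank $\omega$ through the $V$-rank via Lemma~\ref{lemma: V rank as foundation}. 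Local compactness of each individual complete type together with failure of local compactness for the set $p$ would be arranged by making the fibres of $\mathcal{F}_0$ infinite-dimensional at the level of the type-definable set while each complete type remains scattered. For~(3) I would run the same construction with a \emph{two-sided} (stationary) filtration indexed by $\mathbb{Z}$, so that $P_{\mathcal{F}_n}a$ runs through an infinite chain in both directions and $\leq_\mathcal{P}$, $\leq_1$ are simultaneously ill-founded while one-basedness persists.

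Statement~(1) is the delicate one, and I expect it to be the main obstacle. Here I need an infinite $\leq_\mathcal{P}$-ascending chain (so that $\leq_1$ fails to be well-founded) while keeping $\leq_\mathcal{P}$ well-founded of foundation rank exactly $\omega$. The tension is intrinsic: by Proposition~\ref{proposition: ordinal foundation rank gives one-based} foundation rank $\omega$ forces one-basedness, and then the argument behind Proposition~\ref{proposition: scattered implies V rank} shows that any infinite ascending chain $a_0<_\mathcal{P}a_1<_\mathcal{P}\cdots$ is strongly Cauchy with limit $a_\omega\in\mathcal{P}(p)$ carrying an infinite orthogonal decomposition $a_\omega=a_0+\sum_n w_n$ with each $w_n\in\bdd(a_\omega)$. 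Naively one then splits off the $w_n$ along $\bdd$-closed subspaces to manufacture an infinite $\leq_\mathcal{P}$-descending chain below $a_\omega$, forcing the foundation rank to $\infty$. The whole point of the construction is to \emph{block} this: I would arrange the components $w_n$ to be pairwise interdefinable through a definable predecessor map (a shift-like operator), so that $\bdd$ of any nonempty set of them recovers only an \emph{initial segment} $V_m=\overline{\mathrm{span}}\{w_0,\dots,w_{m-1}\}$ and never an arbitrary subset. Then the only $\bdd$-closed subspaces cutting $a_\omega$ are the $V_m$, the projection order on $\{a_n\}\cup\{a_\omega\}$ is the chain $\omega+1$, and $\leq_\mathcal{P}$ is well-founded with $F_\mathcal{P}(a_\omega)=\omega$ while $\leq_1$ is ill-founded.

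The hardest single step is securing failure of local compactness for $p$ \emph{simultaneously} with rank-$\omega$ well-foundedness in~(1): non-local-compactness demands infinitely many $\delta$-separated vectors of $\mathcal{P}(p)$ clustering in norm near a point, i.e. genuine infinite-dimensional freedom, yet every transverse degree of freedom threatens to reopen a way of cutting $a_\omega$ into an infinite descending chain. The plan is to place this freedom entirely ``above'' the chain and bind it by the same definable structure that locks the $w_n$ together, so that the extra directions can be moved by automorphisms but cannot be isolated by any $\bdd$-closed subspace. Verifying that the resulting $\mathcal{P}(p)$ contains no spurious ascending or descending chains beyond the intended ones, and computing it exactly, is the technical heart of the argument.
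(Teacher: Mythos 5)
Your overall instinct matches the paper's: all three items are proved by exhibiting elementary combinatorial examples in which the lattice of $\bdd$-closed subspaces meeting $H_p(M)$ is completely controlled, and your key idea for item (1) --- linking the orthogonal components $w_n$ by a ``definable predecessor map'' so that $\bdd$-closed subspaces can only recover initial segments of the chain --- is precisely the mechanism the paper uses (a chain of equivalence relations $E_{n+1}$ refining $E_n$, with $h(x)=\sum_n \alpha^n_{k(x)}/2^n$; knowing a finer class determines all coarser ones, so the only cuts of $h(x)$ are the truncations $\sum_{n\le m}\alpha^n_{k(x)}/2^n$). Likewise your ``reverse filtration with trivial tail'' for item (2) is the paper's coarsening chain of equivalence relations, whose weak closure consists of the tails $\sum_{n\ge m}\alpha^n_{k(x)}/2^n$ together with $0$.

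The genuine gap is that the proposal never actually produces an example: no theory $T$ is written down, no $\mathcal{H}$ or $p$ is specified, and $\mathcal{P}(p)$ is never computed for any candidate. For an existence statement the construction and its verification \emph{are} the proof, and you explicitly defer exactly that part (``verifying that the resulting $\mathcal{P}(p)$ contains no spurious ascending or descending chains \ldots is the technical heart of the argument''). You also overestimate the difficulty and the machinery required: the paper's examples are classical-logic theories of nested equivalence relations and of a pure infinite set, with the free Hilbert space on the classes (respectively on the set) and $h(x)$ a weighted sum of class-vectors; no $L^2$-spaces of definable measures, conditional expectations, or stationary $\mathbb{Z}$-indexed filtrations are needed. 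In particular item (3), which you treat as requiring a two-sided filtration, is handled by the simplest possible example: $X$ an infinite set forming an orthonormal basis and $p=\tp\bigl(\sum_n e_n/2^n\bigr)$, so that $\mathcal{P}(p)=\{\sum_{n\in J}e_n/2^n\mid J\subseteq\Nn\}$ contains infinite chains in both directions while one-basedness holds because $X$ itself is scattered. To turn your plan into a proof you would need to commit to concrete structures of this kind and carry out the computation of $\mathcal{P}(p)$, $\leq_\mathcal{P}$ and $\leq_1$ in each case.
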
 

\begin{proof}
(1) Let $T$ be the classical logic theory of a collection of equivalence relations $E_n$ on a set $X$ such that $E_0(x, y)$ is the trivial equivalence relation $x = x$ and $E_{n+1}$ refines each $E_n$-class into infinitely many infinite $E_{n+1}$-classes. Let $M\models T$ and for every $n \geq 0$ let $(\alpha_k^n)_{k < \kappa}$ be an enumeration of the $E_n$-classes of $M$. Let $H$ be the free Hilbert space on the set $\bigcup_{k, n \geq 0} \alpha_k^n$, so that $\{\alpha_k^n\mid k, n\geq 0\}$ is an orthonormal basis of $H$. 

Define $h : X \to H$ by $h(x) = \sum_{n \geq 0} \alpha^n_{k(x)}/2^n$ where  $\alpha^n_{k(x)}$ is the $E_n$-class of $x$. Then $h$ gives rise to an interpretation of $H$ in $M$. Let $p$ be the quotient of $X$ by the equivalence relation $h(x) = h(y)$. We view $p$ as a subset of $h$. Then $\mathcal{P}(p) =  p \cup \{\sum_{n \leq m} \alpha_{k(x)}^n/2^n \mid  x \in X, m \geq 0\} $.

(2)   Let $T$ be the classical logic theory of a collection of equivalence relations $(E_n)$ on a set $X$ such that $E_0(x, y)$ is the  equivalence relation $x = y$ and each $E_{n+1}$-class is an infinite union of $E_{n}$-classes.
  Let $M\models T$ and for every $n \geq 0$ let $(\alpha_k^n)_{k < \kappa}$ be an enumeration of the $E_n$-classes of $M$. Let $H$ be the free Hilbert space on the set $\bigcup_{k, n \geq 0} \alpha_k^n$.
  
   Define $h : X \to H$ by $h(x)=  \sum_{n \geq 0} \alpha_{k(x)}^n/2^n$ where $\alpha_{k(x)}^n$ is the $E_n$-class of $x$. Note that $h$ is injective and that the set $h(X)$ is discrete in $H$, since $x \neq y$ implies $\|x - y\| \geq 1$. We also have $\mathcal{P}(X) = \{0\} \cup \{\sum_{n \geq m} \alpha_{k(x)}^n/2^n \mid x \in X, m \geq 0\}$. $\mathcal{P}(X)$ is locally compact everywhere except around $0$.

(3) Let $T$ be the classical logic theory of an infinite set $X$. Let $\mathcal{H}$ be the interpretable Hilbert space such that $X$ is an orthonormal basis of $\mathcal{H}$. Let $M \models T$ be $\omega_1$-saturated as a continuous logic theory. Let $(e_n)$ be an infinite sequence in $X$ and let $p$ be the complete type of the vector $\sum_{n \geq 0} e_n/2^n$. Then $\mathcal{P}(p)=  \{\sum_{n \in J} e_n/2^n\mid J \subseteq \Nn\}$ but $\mathcal{H}$ is one-based because $X$ is scattered.
\end{proof}
 
 The next lemma shows that the rank $\mathcal{F}_{\mathcal{P}}$ is not usually definable. 
\begin{lemma}
 There exists a classical logic theory $T$ with an interpretable Hilbert space $\mathcal{H}$ and a piece $S$ such that 
$S$ is scattered, $\mathcal{F}_\mathcal{P}(S) = \omega$, and  for every complete type $p$ in $S$, $\mathcal{F}_\mathcal{P}(p) < \omega$. 
% an $p$ is scattered, $\mathcal{F}_\mathcal{P}(\mathcal{P}) = \omega$ and for all $x \in p$, $\mathcal{F}_\mathcal{P}(x) < \omega$
\end{lemma}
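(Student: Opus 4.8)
The plan is to realise $S$ as an orthogonal sum of finite ``towers'' of strictly increasing heights, all sharing a common base vector. Each tower will be a single complete type $p_n$ whose weak closure $\mathcal{P}(p_n)$ is a finite chain of height $n$, so that $\mathcal{F}_\mathcal{P}(p_n) = n < \omega$; since the heights are unbounded and these are the only complete types in $S$, the global rank $\mathcal{F}_\mathcal{P}(S)$ will be $\sup_n n = \omega$, a value attained by no single type. The key technical device is the common base: it prevents the towers from weakly accumulating at $0$, which is what keeps $\mathcal{P}(S)$ locally compact.

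Concretely, I would adapt the template of Proposition \ref{proposition: bad examples}(1), truncated to finite depth. Let $T$ be the classical theory of a set $X$ equipped with unary predicates $(P_n)_{n \geq 1}$ partitioning $X$ into regions $X_n = P_n$, a single global equivalence relation $E_0 = X \times X$, and, on each $X_n$, a nested chain $E_0 \supseteq E_1^n \supseteq \cdots \supseteq E_n^n$ in which each $E_{\ell+1}^n$ refines every $E_\ell^n$-class into infinitely many infinite classes. Building the free Hilbert space $\mathcal{H}$ on the set of all equivalence classes and setting, for $x \in X_n$,
\[
h(x) = e_0 + \sum_{\ell = 1}^{n} \alpha^\ell_{k(x)}/2^\ell,
\]
where $e_0$ is the basis vector of the single $E_0$-class and $\alpha^\ell_{k(x)}$ is the basis vector of the $E_\ell^n$-class of $x$, gives an interpretation of $\mathcal{H}$ in $T$. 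Let $S$ be the piece containing $h(X)$. Since the $E_0$-class is global, $e_0 \in \dcl(\emptyset)$ is a common summand of every realisation, while the remaining summands of distinct regions lie in pairwise orthogonal subspaces.

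The key steps are then as follows. First, identify $\mathcal{P}(S)$: using the characterisation $\mathcal{P}(S) = \{P_{\bdd(A)} a \mid a \models S\}$ recalled from \cite{ChevalierHru2021}, together with the fact that indiscernible sequences in $S$ stay inside one region $X_n$ (each $P_n$ being $\emptyset$-definable), one sees that $\mathcal{P}(S) = \bigcup_n \mathcal{P}(p_n)$, where $p_n$ is the unique complete type realised in region $X_n$ and $\mathcal{P}(p_n)$ is the finite chain $e_0 = v_0 <_\mathcal{P} v_1 <_\mathcal{P} \cdots <_\mathcal{P} v_{n-1} <_\mathcal{P} h(x)$ of partial sums, together with the uniformly separated set of realisations on top; crucially $e_0 = P_{\bdd(\emptyset)} h(x)$ is the common minimum and $0 \notin \mathcal{P}(S)$, since $\langle h(x), e_0 \rangle = 1$ for every realisation. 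Second, verify local compactness of $\mathcal{P}(S)$, hence scatteredness of $S$: every realisation of $p_n$ lies at distance $\geq \sqrt{2}/2^n$ from every other, each partial sum is norm-isolated, and points from distinct regions are at distance $\geq 1/\sqrt{2}$ because they share only $e_0$, so every point of $\mathcal{P}(S)$ has a norm-ball meeting $\mathcal{P}(S)$ in a finite set. Third, compute the ranks: the chain below $h(x)$ has length $n$, so $\mathcal{F}_\mathcal{P}(p_n) = n$, every element of $\mathcal{P}(S)$ has finite foundation rank, whence every complete type in $S$ has finite $\mathcal{F}_\mathcal{P}$, while $\mathcal{F}_\mathcal{P}(S) = \sup_n n = \omega$.

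The main obstacle I anticipate is the second step, local compactness, which is precisely where the construction must be tuned: a priori the weak closure of bounded vectors coming from infinitely many orthogonal regions could introduce accumulation points, most dangerously $0$, that destroy local compactness. The two features ruling this out are the shared $\emptyset$-definable base $e_0$, which forces every projection $P_{\bdd(A)} h(x)$ to retain the summand $e_0$ (so $0$ never enters $\mathcal{P}(S)$ and $e_0$ becomes an isolated minimum), and the finite truncation in each region, which makes each $\mathcal{P}(p_n)$ a finite tower carrying a uniformly discrete set of realisations. Once $\mathcal{P}(S)$ is pinned down through the $\{P_{\bdd(A)} a\}$ characterisation, the rank bookkeeping in the third step is routine.
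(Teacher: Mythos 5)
Your high-level strategy is the same as the paper's: realise $S$ as a union of regions carrying towers of strictly increasing finite depths, so that each complete type has finite foundation rank while the supremum over $S$ is $\omega$. (The paper uses the classes $A_m$ of a single equivalence relation $E_1$ as the regions rather than $\emptyset$-definable predicates $P_n$, and it does not need your shared base vector $e_0$: in its construction $0$ does lie in $\mathcal{P}(S)$ but is isolated there.) However, there is a genuine gap in your proposal, and it is not the local compactness step you flag but the complete type ``at infinity.'' The condition that the $P_n$ partition $X$ is not first-order: the partial type $\{\neg P_n(x) \mid n \geq 1\}$ is finitely satisfiable, so any $\omega_1$-saturated model contains elements lying in no region, and these realisations belong to the piece $S$ and must be accounted for both in the scatteredness claim and in the requirement that \emph{every} complete type in $S$ has finite rank. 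Your assertion that every indiscernible sequence in $S$ stays inside one region, and hence that $\mathcal{P}(S) = \bigcup_n \mathcal{P}(p_n)$, fails for exactly these elements.

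Worse, with your numerical choices the inner product is not even definable. The value $\langle h(x), h(y)\rangle$ must be a definable predicate, i.e.\ continuous on the type space of pairs. All relations $E^n_\ell$ live inside $X_n$, so there is a single complete $2$-type of distinct elements outside every $P_n$; but every neighbourhood of that type contains realised pairs in deep regions sharing many levels (inner product close to $4/3$) as well as pairs sharing nothing beyond $e_0$ (inner product $1$), since your increments $4^{-\ell}$ do not depend on the region. So $f$ has no continuous extension to that type. This is precisely the problem the paper's seemingly ad hoc formula $f(x,y) = \frac{m-1}{m} + \frac{1}{m(m+1-k)}$ is engineered to solve: the values on $A_m$ converge to $1$ uniformly in the level $k$ as $m \to \infty$, so $f$ extends continuously to the limit type, which then degenerates (all its realisations in a common class become a single unit vector) and contributes rank $0$. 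Note that the obvious alternative repair --- making the equivalence relations global and keeping fixed increments --- fails in the opposite direction: the type at infinity then carries an infinite proper refinement, giving an element of foundation rank $\geq \omega$ and destroying scatteredness, as in Proposition \ref{proposition: bad examples}(1). To complete your argument you must rescale the tower in region $n$ so that the whole construction collapses at the limit type, at which point you essentially recover the paper's proof.
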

 
\begin{proof}
Let $\mathcal{L}$ be the language of a family $(E_n)_{n \geq 1}$ of binary relations. Consider the $\mathcal{L}$-structure $M$ satisfying the following:
\begin{enumerate}
\item every relation $E_n$ is an equivalence relation such that every $E_n$-class is infinite
\item $E_1$ has countably many classes, which we write $(A_n)_{n \geq 1}$, all of which are infinite
\item Fix $m \geq 1$. Then for every $1 \leq k < m$, $E_{k+1}$ refines $E_k$ on $A_m$ into infinitely many equivalence classes. For every $k \geq m$, $E_{k +1} = E_k$ on $A_m$.
\end{enumerate}
Define the function $f$ on $M \times M$ by 
\begin{enumerate}
\item if $x,y$ are not $E_n$-equivalent for any $n$, then $f(x, y) = 0$. If $x, y$ are $E_n$-equivalent for all $n$, then $f(x, y) = 1$.
\item  if $x,y$ are both in $A_m$ but not $E_n$-equivalent for all $n$, then let 
\[
k = \min(\max\{n \geq 1\mid x\text{ and } y\text{ are }E_n\text{-equivalent}\}, m).
\]
We set
\[
f(x, y) = \frac{m-1}{m}+ \frac{1}{m(m+1 - k)}.
\]
\end{enumerate}
The exact numerical values of $f$ are not essential for our argument. Note that as $m \to \infty$, $f(x, y) \to 1$ on $A_m$ and that on each $A_m$, $f$ takes $m$ distinct values.

In a similar way to the examples given in  Proposition \ref{proposition: bad examples}, one can show that  $f$ is also positive semidefinite. $f$ is clearly definable in $T = Th(M)$. Let $\mathcal{H}$ be the interpretable Hilbert space in $T$ generated by $f$. 

Note that $M$ realises all type of $S$ except one, call it $p$.  $f(x, y) = 1$ on $p \times p$, so $\mathcal{F}_{\mathcal{P}}(p) = 0$.

Now let $q$ be a complete type realised in $M$. Then $q$ is contained in some definable set $A_m$ and it is easy to see that $\mathcal{F}_{\mathcal{P}}(q) = m$.  Therefore $\mathcal{F}_{\mathcal{P}}(S) = \omega$. 
 \end{proof}

\noindent \textbf{Open Question}: Is it possible to find $T$ and $\mathcal{H}$ with a scattered type-definable set $p$ such that $\omega < \mathcal{F}_\mathcal{P}(p) < \infty$?

\bibliographystyle{alpha}
\bibliography{rank-paper}

\end{document}